\documentclass[12pt]{article}
\usepackage{srcltx}
\usepackage{times}
\usepackage{amsmath}
\usepackage{latexsym}
\usepackage{amsfonts}
\usepackage{color}
\usepackage{amssymb}
\usepackage[pdftex]{graphicx}
\usepackage{latexsym}

\newcounter{theorem}

\renewcommand{\thetheorem}{\arabic{section}.\arabic{theorem}}

\newenvironment{thm}[1]{\par\addvspace{0.5cm}
    \begin{sloppypar}\refstepcounter{theorem}%
    {\bf #1 \thetheorem.}\it{}}{\end{sloppypar}}

\newcommand{\eh}{\hfill}\newlength{\sperr}

\newenvironment{theorem}{\begin{thm}{Theorem}} {\end{thm}}
\newenvironment{lemma}{\begin{thm}{Lemma}} {\end{thm}}
\newenvironment{corollary}{\begin{thm}{Corollary}} {\end{thm}}

\newenvironment{defi}[1]{\par\addvspace{0.5cm}
\begin{sloppypar}\refstepcounter{theorem}%
{\bf #1 \thetheorem.}\rm{}}{\end{sloppypar}}

\newenvironment{example}{\begin{defi}{Example}}{\end{defi}}
\newenvironment{remark}{\begin{defi}{Remark}}{\end{defi}}

\newenvironment{observation}{\begin{thm}{Observation}} {\end{thm}}

\newenvironment{proof}{{\settowidth{\sperr}{\rm Proof}
\par\addvspace{0.3cm}\parbox[t]{1.3\sperr}{\rm P\eh r\eh o\eh o\eh f\eh. }%
}}{\nopagebreak\mbox{}\hfill $\Box$\par\addvspace{0.25cm}}

\textwidth=166mm \textheight=232mm \oddsidemargin=-2mm
\evensidemargin=-2mm \topmargin=0mm

\newcommand{\vs}{\vspace}
\newcommand{\al}{\alpha}
\newcommand{\bt}{\beta}

\newcommand{\Gm}{\Gamma}

\newcommand{\intl}{\int\limits}

\begin{document}

\begin{center}
\textbf{  \large   Sharpness of some Hardy-type inequalities}

\end{center}


\begin{center}
\textbf{ Lars-Erik Persson$^{1,2}$, Natasha Samko$^{1}$ and George Tephnadze$^{3}$}
\end{center}
 \textit{ $^{1}$UiT The Arctic University of Norway, P.O. Box 385, N-8505, Narvik, Norway,\\
 $^{2}$ Karlstad University, 65188 Karlstad, Sweden,\\
$^{3}$ The University of Georgia,  77a Merab Kostava St, Tbilisi, 0128, Georgia.\\ }

\begin{abstract}
Abstract: The current status concerning Hardy-type inequalities with sharp constants is presented and described in a unified convexity way. In particular, it is then natural to replace the Lebesgue measure $dx$ with the Haar measure $dx/x.$ There are also derived some new two-sided Hardy-type inequalities for monotone functions, where not only the two constants are sharp but also where the involved function spaces are (more) optimal. As applications, a number of both well-known and new Hardy-type inequalities are pointed out. And, in turn, these results are used to derive some new sharp information concerning sharpness in the relation between different quasi-norms in Lorentz spaces.
\end{abstract}

\vs{4mm}

\noindent{\bf 2020 Mathematics Subject Classification:}  26D10, 46E30.

\noindent{\bf Key words and phrases:} Inequalities, Hardy-type inequalities, sharp constants, optimal target function, Lorentz spaces.

\section{ Introduction}
 \setcounter{equation}{0}\setcounter{theorem}{0}

\quad \   The continuous Hardy inequality from 1925 (see \cite{H}) informs us if $f$ is non-negative $p$-integrable function on $(0,\infty),$ then $f$ is integrable over the interval $(0,x)$ for each positive $x$ and
\begin{eqnarray}\label{1.1}
\int_0^\infty\left(\frac{1}{x}\int_0^x f(y)dy\right)^pdx\leq \left(\frac{p}{p-1}\right)^p\int_0^\infty f^p(x)dx, \ \ p>1.
\end{eqnarray}

The development of the famous Hardy inequality in both discrete and continuous forms during the period 1906 to 1928 has its own history or, as we call it, prehistory. Contributions of mathematicians other than G.H. Hardy such as E. Landay, G. Polya, E. Schur and M. Reisz, are important here. This prehistory was described in \cite{KMP}.

The first weighted version of \eqref{1.1} was proved by Hardy himself in 1928 (see \cite{H3}):
\begin{equation}\label{1.2}
\int_0^\infty \left(\frac{1}{x} \int_0^x f(y)dy \right)^p x^a dx\le  \left(\frac{p}{p-1-a}\right)^p \int_0^\infty f^p(x) x^a dx
\end{equation}
where $f$ is a measurable and non-negative function on $(0,\infty)$ whenever
$a<p-1, p>1.$

In the remarkable further development to which today is called Hardy-type inequalities, in the case of weighted Lebesgue spaces mostly the Lebesgue measure $dx$ is used (see the books \cite{HaLiPoB}, \cite{KMPB},   \cite{KuMaLEP} and \cite{KPNS} and the references therein). One basic idea in this paper is to use convexity and then it is more natural to instead use the measure $dx/x$ (=Haar measure when the underlying group is $R+$). Moreover, this way to consider the situation helps us to easier investigate and describe the sharpness in Hardy-type inequalities. In this theory of Hardy-type inequalities (between weighted Lebesgue spaces) we usually have good estimates of the sharp constant (= the operator norm or quasi-norm). However, in very few cases the sharp constant is known.

In this paper we describe and/or derive most of these Hardy-type inequalities in the convexity $(dx/x)$ frame described above. Moreover, we concentrate also on the problem to derive the corresponding reversed inequalities in cones of monotone functions. And still with sharp constants. It turns out that our approach also implies that the sharpness can be further improved in special situations e.g. to not only have sharp constant(s) but also by involving more optimal function spaces, sometimes even with optimal so called target functions involved. In order to illustrate this idea we present the following introductory example:

\begin{example}\label{Example1.1} The inequality \eqref{1.2} holds also if the interval $(0, \infty)$ is replaced by $(0,\ell),$ $0 < \ell \leq \infty,$ and still the constant
$$C =\left(\frac{p}{p-1-\alpha}\right)^p$$
is sharp. However also the following “sharper” inequality is known (see \cite{LEPNS12} and c.f. Theorem 2.3 a) in the book \cite{KPNS}):
 \begin{equation}\label{1.3}
\int_0^\ell \left(\frac{1}{x} \int_0^x f(y)dy
\right)^p x^{\alpha}dx\le \left(\frac{p}{p-1-a}\right)^p \int_0^\ell f^p(x)x^{\alpha} \left[1-\left(\frac{x}{\ell}\right)^{\frac{p-1-a}{p}} \right]dx,
\end{equation}
where $\alpha < p-1, p>1,$ and still the constant $C=\left(\frac{p}{p-1-a}\right)^p$  is sharp. Moreover, we note that in the cone of non-increasing functions \eqref{1.2} holds in the reversed direction with the constant $C = 1.$ But indeed  \eqref{1.3} holds also in the reversed direction with the sharp constant $C = p/(p-1-a)>1$  whenever $\alpha > -1$. See our Theorem 3.2 a). In such a situation when both constants are sharp we say that the involved weight function
$$g(x) := 1-\left(\frac{x}{\ell}\right)^{\frac{p-1-a}{p}}< 1, \ x<\ell,$$
is the “optimal target function”.
\end{example}

The paper is organized as follows: In Section 2 we present the mentioned convexity approach to derive power weighted Hardy-type inequalities and some of its consequences. Here, and in the sequel, it turns out that this convexity approach makes it natural to present such inequalities by using the Haar measure $dx/x$ instead of the Lebesgue measure dx. In Section 3 we derive some new sharp reversed Hardy-type inequalities on cones of monotone functions. Section 4 is used to present and discuss some new applications e.g. concerning two-sided Hardy-type inequalities where both constants are sharp and, moreover, the actual inequalities are further sharpened by pointing out (more) optimal involved function spaces. These results, in its turn, make it possible to derive some new results concerning comparisons of different norms in Lorentz spaces. Finally, Section 5 is reserved for some concluding remarks and for presenting and/or deriving some further sharp Hardy-type inequalities.

\section{ A convexity approach to derive sharp power weighted Hardy-type inequalities} \label{S2}
\setcounter{equation}{0}\setcounter{theorem}{0}

\qquad The fact that the concept of convexity can be used to prove several inequalities, both classical and new ones, was of course known by Hardy himself. For example in the famous book \cite{HaLiPoB} this concept and the more or less equivalent Jensen inequality was frequently used. Hence, it may be surprising that Hardy himself never discovered that also his famous inequality in both original (see \eqref{1.1}) and power weighted form (see e.g. \eqref{1.2}) follow more or less directly as described below. Concerning convexity and its applications e.g. to prove inequalities we refer to the recent book \cite{NikPers},  the papers \cite{LEPNS12}, \cite{LEPSS15}, and the references therein.

\subsection{  A new look on the inequalities \eqref{1.1} and \eqref{1.2} \label{ss2.1}}
 \begin{observation}\label{o181}
 We note that for $p>1$
$$
 \int_0^\infty
\left(\frac{1}{x}\int_0^xf(y)dy \right)^p dx \le
\left(\frac{p}{p-1}\right)^p \int_0^\infty f^p(x)dx,
$$
$$\Leftrightarrow $$
\begin{equation}\label{2.1}
 \int_0^\infty \left(\frac{1}{x}\int_0^x g(y)dy
\right)^p \frac{dx}{x} \le 1\cdot \int_0^\infty
g^p(x)\frac{dx}{x},
\end{equation}
where
 $f(x) = g (x^{1-1/p})x^{-1/p}. $
\end{observation}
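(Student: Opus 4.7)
The plan is to verify the stated equivalence by a direct change of variables, in two stages: first transform the inner average $\frac{1}{x}\int_0^x f(y)\,dy$, then the outer Lebesgue integrals, and finally check that the Jacobian factors conspire to absorb exactly the constant $\left(p/(p-1)\right)^p$.

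First I would substitute $f(y)=g(y^{1-1/p})\,y^{-1/p}$ into the inner integral and change variables $u=y^{1-1/p}$, for which $du=\frac{p-1}{p}\,y^{-1/p}\,dy$. This gives
\[
\int_0^x f(y)\,dy \;=\; \frac{p}{p-1}\int_0^{x^{1-1/p}} g(u)\,du,
\]
so that, writing $t=x^{1-1/p}$, the averaged quantity becomes
\[
\left(\frac{1}{x}\int_0^x f(y)\,dy\right)^{p} \;=\; \left(\frac{p}{p-1}\right)^{p}\frac{1}{x^{p}}\left(\int_0^{t} g(u)\,du\right)^{p}.
\]
The key observation now is that $t^{p}/x^{p}=(x^{1-1/p}/x)^{p}=x^{-1}$, so that $x^{-p}=t^{-p}\,x^{-1}$; this lets me rewrite the integrand as
\[
\left(\frac{p}{p-1}\right)^{p}\!\left(\frac{1}{t}\int_0^{t} g(u)\,du\right)^{p}\frac{1}{x}.
\]

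Next I would carry out the outer change of variables $t=x^{1-1/p}$, which satisfies $\frac{dx}{x}=\frac{p}{p-1}\,\frac{dt}{t}$. Applying this to both sides of the inequality yields
\[
\int_0^{\infty}\!\left(\frac{1}{x}\int_0^{x}\!f(y)\,dy\right)^{p}\!dx \;=\; \left(\frac{p}{p-1}\right)^{p+1}\!\int_0^{\infty}\!\left(\frac{1}{t}\int_0^{t}\!g(u)\,du\right)^{p}\!\frac{dt}{t},
\]
and, by the same substitution applied directly to $f^{p}(x)\,dx=g^{p}(x^{1-1/p})\,\frac{dx}{x}$,
\[
\int_0^{\infty} f^{p}(x)\,dx \;=\; \frac{p}{p-1}\int_0^{\infty} g^{p}(t)\,\frac{dt}{t}.
\]
Plugging these two identities into \eqref{1.1} and dividing through by the common factor $\left(p/(p-1)\right)^{p+1}$ gives exactly \eqref{2.1} with constant $1$. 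For the reverse implication, I would simply invert the substitution by setting $g(t)=f(t^{p/(p-1)})\,t^{1/(p-1)}$ and retrace the same computation, or just note that the change of variables is a bijection on nonnegative measurable functions.

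There is essentially no obstacle here; the only point requiring attention is bookkeeping of the exponents, in particular verifying the cancellation $t^{p}/x^{p}=1/x$ and the Jacobian identity $dx/x=\frac{p}{p-1}\,dt/t$. Together they account for the total factor $\left(p/(p-1)\right)^{p+1}$ that matches the factor $\left(p/(p-1)\right)^{p}$ on the right together with the extra $p/(p-1)$ from the right-hand substitution, leaving constant $1$ in \eqref{2.1}.
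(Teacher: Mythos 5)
Your computation is correct and is exactly the verification the paper leaves implicit: the paper simply asserts the substitution $f(x)=g(x^{1-1/p})x^{-1/p}$ and the equivalence, while you carry out the change of variables and confirm that the Jacobian factors cancel the constant $\left(\frac{p}{p-1}\right)^{p}$. No gaps; the inverse substitution $g(t)=f(t^{p/(p-1)})t^{1/(p-1)}$ correctly handles the converse direction.
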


This means that Hardy's inequality \eqref{1.1} is
equivalent to \eqref{2.1} for $p>1$  and, thus, that  Hardy's
inequality can be proved in the following simple way (see form
\eqref{2.1}): By Jensen's inequality and Fubini's theorem we have
that
\begin{eqnarray}\label{2.2}
\int_0^\infty\left(\frac{1}{x}\int_0^x
g(y)dy\right)^p\frac{dx}{x}&\le&
\int_0^\infty\left(\frac{1}{x}\int_0^x
g^p(y)dy\right)\frac{dx}{x}\\ \notag
&= & \int_0^\infty g^p(y)\int_y^\infty
\frac{dx}{x^2} dy = \int_0^\infty g^p(y)\frac{dy}{y}.
\end{eqnarray}

By instead making the substitution
$$f(t) = g(t^{\frac{p-1-a}{p}}) t^{-\frac{1+a}{p}}$$ in \eqref{1.2} we see that also this inequality is equivalent to \eqref{2.1}. These facts imply especially the following:

\ (a) \ Hardy's inequalities \eqref{1.1} and \eqref{1.2} hold also for $p<0$ (because the function $\varphi(u) = u^p$ is convex also for $p<0$) and hold in the reverse direction for $0<p<1$ (with sharp constants $ \left(\frac{p}{1-p} \right)^p$ and $\left(\frac{p}{a+1-p} \right)^p, a > p-1,$ respectively).

\ (b) \ The inequalities \eqref{1.1} and \eqref{1.2} are equivalent, since both are equivalent  to \eqref{2.1}

\ (c) \ The inequality \eqref{2.1} holds also with equality for $p=1,$ which gives us a possibility to interpolate and get more information about the mapping properties of the Hardy operator. In particular, we can use interpolation theory to see that in fact the Hardy operator $H$ maps each interpolation space $B$ between $L_1 \left((0, \infty), \frac{dx} x \right)$ and $L_\infty \left((0, \infty), \frac{dx} x \right)$ into $B,$ i.e. that the following more general Hardy type inequality holds:
$$ \| Hf\|_B \le C\| f\|_B. $$

 \subsection{ Further consequences of the new look presented in Section \ref{ss2.1}}\label{ss2.2}

\qquad  For the finite interval case we need the following extention of our basic (convexity) form of Hardy´s inequality presented in Section 2.1.

 \begin{theorem} \label{l2.1}
Let $g$ be a non-negative and measurable function on $(0,\ell), 0<\ell \le \infty.$\\
 \vspace{2mm}
   a) \ \ \ \ If $p<0$ or $p\ge 1,$ then

 \begin{equation}\label{2.3}
 \int_0^\ell \left(\frac{1}{x} \int_0^x g(y)dy
\right)^p \frac{dx}{x}\le 1 \cdot \int_0^\ell g^p(x) \left(
1-\frac{x}{\ell} \right)\frac{dx}{x}.
\end{equation}
(In the case $p<0$ we assume that $g(x)>0, 0 < x \le \ell$).\\
\vspace{4mm}
  b) \ \ \ \ If \ $0<p\le 1,$ then \eqref{2.2} holds in the reversed
  direction.\\
  \vspace{4mm}
c) \ \ \ \ The constant $C=1$ is sharp in both \ a) \ and \
  b).
\end{theorem}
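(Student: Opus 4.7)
The plan is to imitate the two-line derivation of Hardy's inequality given in Observation \ref{o181} and \eqref{2.2}, but to carry out the Fubini step over the finite interval $(0,\ell)$ and let the natural boundary term produce the factor $1-x/\ell$. The point is that nothing more than Jensen's inequality (applied to $\varphi(u)=u^p$) plus Fubini's theorem is actually needed.

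For part (a), since $\varphi(u)=u^p$ is convex on $(0,\infty)$ for $p\ge 1$ and also for $p<0$ (in the latter case $g>0$ is assumed so everything is well-defined), Jensen's inequality applied to the probability measure $\frac{dy}{x}$ on $(0,x)$ gives
\begin{equation*}
\left(\frac{1}{x}\int_0^x g(y)\,dy\right)^p \le \frac{1}{x}\int_0^x g^p(y)\,dy.
\end{equation*}
Integrating against $dx/x$ over $(0,\ell)$ and swapping the order of integration by Fubini, the inner $x$-integral becomes
\begin{equation*}
\int_y^\ell \frac{dx}{x^2} = \frac{1}{y}-\frac{1}{\ell} = \left(1-\frac{y}{\ell}\right)\frac{1}{y},
\end{equation*}
which is exactly the weight appearing on the right-hand side of \eqref{2.3}. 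Part (b), i.e.\ the case $0<p\le 1$, follows by the identical argument: $u^p$ is then concave, Jensen reverses, and the same Fubini computation yields the reversed form of \eqref{2.3}. (The case $p=1$ is actually an equality throughout.)

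For part (c), to see that $C=1$ cannot be improved in either direction, the standard recipe is to test against a one-parameter family of power functions $g_s(x)=x^s$ on $(0,\ell)$ and track the ratio of the two sides as $s\to 0$, since Jensen's inequality is asymptotically sharp when $g$ becomes constant. A direct computation gives $\frac{1}{x}\int_0^x g(y)\,dy = x^s/(s+1)$, from which both sides of \eqref{2.3} evaluate to explicit multiples of $\ell^{sp}$, and the ratio
\begin{equation*}
\frac{\text{LHS}}{\text{RHS}} = \frac{sp+1}{(s+1)^p}
\end{equation*}
tends to $1$ as $s\to 0^+$ (with $s$ chosen of the appropriate sign so that the integrals converge in each case, i.e.\ $sp>0$). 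This shows the constant $1$ is optimal in both (a) and (b).

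The only delicate point is in (c): one must keep the test functions inside the admissible class (ensuring $sp>0$ so the integrals are finite and, when $p<0$, that $g_s>0$), and in the $p<0$ regime the sign of $s$ must be chosen accordingly. Otherwise the proof is a direct application of Jensen plus Fubini, with the finite-interval correction $(1-x/\ell)$ appearing automatically from the truncation of the inner integral $\int_y^\infty dx/x^2$ to $\int_y^\ell dx/x^2$.
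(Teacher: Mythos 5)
Your proposal is correct and follows essentially the same route as the paper: Jensen's inequality plus Fubini with the truncated inner integral $\int_y^\ell dx/x^2$ producing the factor $1-x/\ell$, and sharpness via the test functions $x^s$ with the ratio $(sp+1)/(s+1)^p\to 1$ as $s\to 0$. Your added remark that one must take $sp>0$ (hence $s<0$ when $p<0$) is a small but worthwhile precision that the paper glosses over.
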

\begin{proof}
Proof a) The proof only consists of an obvious modifications of \eqref{2.2}.

b) Since Jensen´s inequality holds in the reversed direction for the comcave function
$$\phi(u) = u^p, \ \ 0 <p \leq 1,$$
the proof follows in the same way.

c) Assume that \eqref{2.3} with the constant $1$ replaced by some constant $C, \ 0 < C < 1.$ By applying \eqref{2.3} with the test functions
$g(x) = x^\alpha, \  0 \leq x\leq \ell, \  a>0,$
a simple calculation shows that
$$\frac{\alpha p+1}{\left(\alpha+1\right)^p}\leq C < 1$$
so by choosing $\alpha$ sufficiently small we get a contradiction and the proof is complete concerning  a). The proof of the sharpness of b) is obtained by making an obvious modification of this argument so the proof is complete.
\end{proof}

  \vspace{4mm}
By doing similar calculations as in the proof of Theorem 2.4 in \cite{LEPNS12} (see also Theorem 7.10 in the book \cite{KPNS}), or just doing appropriate transformations, we obtain the following symmetric version of this equivalence Theorem:


 \begin{theorem}\label{Th2.3}
Let $ 0 < \ell \le \infty, $ let $p \in \mathbb{R}_+ \setminus
\{0\} $ and let $f$ be a non-negative function. Then\\
a) \ \ \ the inequality
\begin{equation}\label{2.4}
\int_0^\ell \left( \int_0^x
f(y)dy \right)^p x^{-\alpha} \frac{dx}{x}\le  \left(\frac{p}{\alpha}\right)^p
\int_0^\ell \left(xf(x)\right)^p x^{-\alpha} \left[
1-\left(\frac{x}{\ell}\right)^\frac{\alpha}{p} \right]\frac{dx}{x}
\end{equation}
holds
for all measurable functions $f, $ each $\ell, \ 0 < \ell \le \infty$ and all $\alpha$ in the following cases:
$$ \ \ \ \ \ \ \ \ \ \ \ \ (a_1) \ \ \ \ \ \ \ \ p\ge 1, \alpha>0,  $$
$$ \ \ \ \ \ \ \ \ \ \ \ \ (a_2) \ \ \ \ \ \ \ \ p< 0, \alpha<0.  $$\\
b) \ \ \ For the case $0<p<1, \ \alpha>0,$ inequality \eqref{2.4}
holds in the reversed direction for all $\ell, \ 0<\ell\leq \infty$.\\
c) \ \ \ \ \ The inequality
\begin{equation} \label{2.5}
 \int_\ell^\infty \left( \int_x^\infty f(y)dy
\right)^p x^{\alpha} \frac{dx}{x} \le  \left(\frac{p}{\alpha}\right)^p
\int_\ell^\infty f^p(x) x^{\alpha} \left[
1-\left(\frac{\ell}{x}\right)^\frac{\alpha}{p} \right]\frac{dx}{x}
\end{equation}
holds for all measurable functions $f, $ each $\ell, 0 \le \ell < \infty$ and all $\alpha$ in the following cases:
$$ (c_1) \ \ \ \ \ \ \ \ p\ge 1, \alpha>0,$$
$$ (c_2) \ \ \ \ \ \ \ \ p<0, a<0.$$\\
d) \ \ \ \ \ For the case $0 < p \le 1, \ \alpha >0$ inequality \eqref{2.5}
holds in the reversed direction for all $\ell, \ 0\leq\ell< \infty$.\\
e) \ \ \ \ \ All inequalities above are sharp.\\
f) \ \ \ \ \ Let $p\ge1$ or $p<0.$ Then, the statements in a) and c) are equivalent for all permitted $\alpha$.\\
g) \ \ \ \ \ Let $0<p<1.$ Then, the statements in b) and d) are equivalent for all permitted $\alpha.$
\end{theorem}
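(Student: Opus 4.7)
The strategy is to reduce each of (a)--(d) to the basic convexity form (2.3) of Theorem \ref{l2.1} by an invertible change of variables, and then to read off the equivalences and sharpness from the same substitutions.

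For (a), I set $u = x^{\alpha/p}$ in (2.3) with $L = \ell^{\alpha/p}$, and couple the two functions via $g(u) = (p/\alpha)\,f(u^{p/\alpha})\,u^{p/\alpha - 1}$. Under this change, $du/u$ pulls out a Jacobian factor $\alpha/p$; the average $\frac{1}{u}\int_0^u g(v)\,dv$ becomes $x^{-\alpha/p}\int_0^x f(y)\,dy$; the weight $1 - u/L$ becomes $1 - (x/\ell)^{\alpha/p}$; and $g^p(u)$ generates the factor $(p/\alpha)^p (xf(x))^p x^{-\alpha}$ on the right. Dividing through by $\alpha/p$ yields (2.4) verbatim, which handles both regimes $(a_1)$ and $(a_2)$ since $\alpha/p > 0$ in each, so the substitution is monotone on $(0, \ell)$. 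Part (b) is obtained identically by invoking the reversed form of Theorem \ref{l2.1}(b) valid for $0 < p \leq 1$.

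For (c), I would reduce to (a) by the inversion $x \mapsto \ell^2/x$, $y \mapsto \ell^2/y$, which maps $(\ell, \infty)$ onto $(0, \ell)$, turns the tail integral $\int_x^\infty f(y)\,dy$ into a left-endpoint integral after the rescaling $\widetilde f(\widetilde y) = \ell^2 \widetilde y^{-2}\, f(\ell^2/\widetilde y)$, and transforms the weights $x^\alpha$ and $1 - (\ell/x)^{\alpha/p}$ into $\widetilde x^{-\alpha}$ and $1 - (\widetilde x/\ell)^{\alpha/p}$ up to an explicit power of $\ell$. Part (d) follows from (b) the same way. Because all of these substitutions are bijective, each implication can be read in both directions, proving the equivalences (f) and (g) at once. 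For sharpness (e), the sharp constant $1$ in Theorem \ref{l2.1}(c) transports through the substitutions to the sharp constant $(p/\alpha)^p$ in (2.4) and (2.5); equivalently, one tests with $f(x) = x^\gamma$ and lets $\gamma$ approach the critical exponent $\alpha/p - 1$, forcing the ratio of the two sides to $1$.

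I expect the main obstacle to be careful bookkeeping of Jacobian factors, orientations, and signs of $\alpha/p$ in each of the four sign regimes, so that the integration limits transform correctly and the weights $1 - (x/\ell)^{\alpha/p}$ and $1 - (\ell/x)^{\alpha/p}$ remain nonnegative on their respective domains; once this is settled, the proof reduces to a routine chain of changes of variables starting from (2.3).
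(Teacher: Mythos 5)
Your proposal is correct and follows essentially the same route as the paper, which obtains Theorem \ref{Th2.3} from the convexity form \eqref{2.3} of Theorem \ref{l2.1} ``by doing appropriate transformations'' --- precisely your substitution $u=x^{\alpha/p}$ with $g(u)=(p/\alpha)f(u^{p/\alpha})u^{p/\alpha-1}$ (valid in all four regimes since $\alpha/p>0$ there), followed by an inversion for the dual parts and transport of the sharp constant $1$. Two bookkeeping points only: for part c) with $\ell=0$ the map $x\mapsto \ell^{2}/x$ degenerates, so one should use $x\mapsto 1/x$ (sending $(\ell,\infty)$ onto $(0,1/\ell)$) instead; and your change of variables correctly yields $(xf(x))^{p}x^{\alpha}$ on the right-hand side of \eqref{2.5} --- the $f^{p}(x)x^{\alpha}$ printed there is a misprint, since a dilation argument shows the printed form cannot hold with a constant independent of $f$.
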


\begin{remark}\label{rem3.3} For the case $l = \infty$ the inequalities \eqref{2.4} and \eqref{2.5} were formulated, proved and applied in this convexity form in the new book \cite{PersTepWe}. This fact has further inspired us to reformulate our results in this convexity $(dx/x)$ way, which not only contribute to a better understanding but is also more suitably for such applications in modern harmonic analysis.
\end{remark}

\section{Reversed sharp Hardy inequalities for monotone functions}
\setcounter{equation}{0}\setcounter{theorem}{0}

For the proof of our main results in this Section we need the following Lemma:

\begin{lemma}\label{Lemma3.1}
Let $p > 0,$ $\frac{1}{p}+\frac{1}{q}=1$ and let $f$ be a non-negative and measurable function on $(a,b),$ $\infty \leq a < b\leq \infty.$

$a)$ Let $f$ be non-increasing on $(a, b),$ $\infty < a < b\leq \infty.$ If $p \geq 1,$ then
\begin{eqnarray}\label{3.1}
{{\left( \int\limits_{a}^{b}{f\left( y \right)dy} \right)}^{p}}\ge p\int\limits_{a}^{b}{{{\left( y-a \right)}^{p-1}}{{\left( f\left( y \right) \right)}^{p}}dy.}
\end{eqnarray}
If $0 < p \leq 1,$ then \eqref{3.1} holds in the reversed direction.

$b)$ Let $f$ be non-decreasing on $(a,b),$ $-\infty \le a<b<\infty .$ If $p \geq 1,$ then
\begin{eqnarray}\label{3.2}
{{\left( \int\limits_{a}^{b}{f\left( y \right)dy} \right)}^{p}}\ge p\int\limits_{a}^{b}{{{\left( b-y \right)}^{p-1}}{{\left( f\left( y \right) \right)}^{p}}dy.}
\end{eqnarray}
If $0 < p \leq 1,$ then \eqref{3.2} holds in the reversed direction.

$c)$ The constant $p$ is sharp in all these four inequalities. In fact, we have even equality in \eqref{3.1} for the function $f\left( y \right)=A{{\chi }_{\left( a,c \right)}}\left( y \right)$ for some $c\in \left( a,b \right)$ and $A > 0.$ Moreover, equality in \eqref{3.2} holds if $f\left( y \right)=A{{\chi }_{\left( c,b \right)}}\left( y \right)$ for some $c\in \left( a,b \right)$ and $A>0.$
\end{lemma}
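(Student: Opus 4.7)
The plan is to reduce all four inequalities to a single device: applied to $F(x) := \int_a^x f(y)\,dy$ (so $F(a)=0$), the fundamental theorem of calculus yields the identity
\begin{equation*}
F(b)^p \;=\; \int_a^b p\, F(x)^{p-1} f(x)\,dx.
\end{equation*}
For part $a)$ the non-increasing hypothesis on $f$ gives the pointwise bound $F(x) \ge (x-a)f(x)$, since the average of $f$ over $[a,x]$ is at least its infimum $f(x)$. When $p \ge 1$ the map $u \mapsto u^{p-1}$ is non-decreasing on $(0,\infty)$, hence $F(x)^{p-1}f(x) \ge (x-a)^{p-1}f(x)^p$; integrating and using the identity produces \eqref{3.1}. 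When $0 < p \le 1$ the map $u \mapsto u^{p-1}$ is non-increasing, the pointwise inequality flips, and so does \eqref{3.1}.

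For part $b)$ I would set $G(x) := \int_x^b f(y)\,dy$; the non-decreasing hypothesis gives $G(x) \ge (b-x)f(x)$ by the same averaging argument, and since $G'(x) = -f(x)$ one has the symmetric identity
\begin{equation*}
G(a)^p \;=\; \int_a^b p\, G(x)^{p-1} f(x)\,dx.
\end{equation*}
Repeating the monotonicity argument for $u \mapsto u^{p-1}$ then yields \eqref{3.2} in both directions. Alternatively, $b)$ can be deduced from $a)$ by the affine change of variables $y \mapsto a+b-y$ on any finite interval, which also explains why the two parts carry the opposite endpoint restrictions $b \le \infty$ and $a \ge -\infty$.

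For part $c)$ I would plug the test function $f = A\chi_{(a,c)}$ into \eqref{3.1}: the left side equals $\left(A(c-a)\right)^p$, while the right side equals $pA^p \int_a^c (y-a)^{p-1}\,dy = A^p (c-a)^p$. Equality therefore holds, so the constant $p$ cannot be replaced by any larger value in the $p \ge 1$ regime nor by any smaller value in the reversed regime. The analogous test function $A\chi_{(c,b)}$ settles \eqref{3.2}.

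The only real obstacle is technical: the identity $F(b)^p = \int_a^b pF(x)^{p-1}f(x)\,dx$ requires $F^p$ to be absolutely continuous on $[a,b]$, which deserves care when an endpoint is infinite or when $0 < p < 1$ near the point where $F$ vanishes. This is handled by first working on $[a+\varepsilon, b-\varepsilon]$ (or $[a+\varepsilon, N]$ if $b=\infty$), where $F$ is bounded away from $0$ and $\infty$, applying the identity there, and passing to the limit by monotone convergence on the right-hand side. If the limiting right-hand side is $+\infty$ the forward inequality is trivial, and for the reversed inequality it simply forces $F(b)^p = +\infty$ as well, which is consistent with the statement.
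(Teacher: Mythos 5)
Your proposal is correct and follows essentially the same route as the paper: both rest on the identity $\frac{d}{dx}F(x)^p = pF(x)^{p-1}f(x)$ combined with the pointwise bound $F(x)\ge (x-a)f(x)$ from monotonicity, with part $b)$ reduced to part $a)$ by the reflection $y\mapsto a+b-y$, sharpness checked by direct computation on the indicator test functions, and infinite endpoints handled by a limiting argument. Your treatment of the absolute-continuity issue near the vanishing point of $F$ when $0<p<1$ is somewhat more careful than the paper's, but the underlying argument is the same.
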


Proofs of various variants of this Lemma can be found in many places (see e,g, \cite{BBP}) but for the readers convenience, we include a simple proof of just this variant.

\begin{proof}
First assume that $- \infty < a < b < \infty.$ Next we observe that the proof of $b)$ can be reduced to that of $a)$ by putting $g(y) =f(a+b-y).$ Hence, it is sufficient to prove $a).$ Moreover, by making suitable coordinate transformations we conclude that it is sufficient to consider the case $(a,b) = (0.1).$ Therefore, we consider a non-negative, measurable and non-increasing function $f$ on $(0,1).$

Let
$$F\left( x \right)=\int\limits_{0}^{x}{f\left( y \right)dy}.$$
Then $F(0) = 0,$ and, for almost all $x\in \left( 0,1 \right),$ if $p\ge 1$ then
$$\frac{d}{dx}{{\left( F\left( x \right) \right)}^{p}}=pf\left( x \right){{\left( F\left( x \right) \right)}^{p-1}}\ge p{{x}^{p-1}}{{\left( f\left( x \right) \right)}^{p}}$$

By integrating from $0$ to $1$ we find that
$${{\left( \int\limits_{0}^{1}{f\left( y \right)dy} \right)}^{p}}={{\left( F\left( 1 \right) \right)}^{p}}\ge p\int\limits_{0}^{1}{{{y}^{p-1}}f\left( y \right)dy.}$$

The same argument shows that this inequality holds in the reversed direction if $0 < p \leq 1.$ We conclude that $a)$ and $b)$ are proved. It is obvious that we have equality in the inequalities \eqref{3.1}  and  \eqref{3.2}  and their reversed versions for $0 < p \leq 1$  for the claimed test functions
$$f\left( y \right)=A{{\chi }_{\left( a,c \right)}}\left( y \right) \ \ \text{ and } \ \ \ f\left( y \right)=A{{\chi }_{\left( c,b \right)}}\left( y \right),$$
respectively.

The proof of the cases $a = -\infty$ or $b = \infty$ follows by just doing a limit procedure so the proof is complete.
\end{proof}

First we consider the case when $f$ is non-increasing and note that then such a reversed inequality has meaning only if $0 < \alpha < p$ (since if not the involved integrals diverges for all non-trivial functions $f$).

Our first main result reads:

\begin{theorem}\label{Theorem3.2}
Let $p >0,$ $0 < \alpha < p$ and let $f$ be a measurable, non-negative and non-increasing function on $(0,\ell),$ $0 < l \leq\infty.$

$a)$ If $p \geq1,$ then
\begin{eqnarray}\label{3.3}
 \int\limits_{0}^{\ell}{{{\left( \int\limits_{0}^{x}{f\left( y \right)dy} \right)}^{p}}{{x}^{-\alpha }}\frac{dx}{x}}
 \ge \frac{p}{\alpha }\int\limits_{0}^{x}{{{\left( xf\left( x \right) \right)}^{p}}{{x}^{-\alpha }}\left( 1-{{\left( \frac{x}{\ell} \right)}^{\alpha }} \right)\frac{dx}{x}.}
\end{eqnarray}

$b)$ If $0 < p \leq1,$ then \eqref{3.3} holds in the reversed direction.

$c)$ The constant $C = p/\alpha$ is sharp in both $a)$ and $b)$ and equality appears for each function $f\left( x \right)=A{{\chi }_{\left( 0,c \right)}}\left( x \right)$ for some $c\in \left( 0,l \right)$ and $A>0.$
\end{theorem}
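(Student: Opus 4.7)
The plan is to prove all three parts by applying Lemma~\ref{Lemma3.1} pointwise in $x$ to the inner integral $\int_0^x f(y)\,dy$, then swapping the order of integration via Fubini--Tonelli, and finally evaluating the resulting elementary $x$-integral in closed form.

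For part (a), fix $x\in(0,\ell)$. Since $f$ is non-increasing on $(0,\ell)$, and hence on $(0,x)$, Lemma~\ref{Lemma3.1} (a) applied with $(a,b)=(0,x)$ gives
$$
\left(\int_0^x f(y)\,dy\right)^p \ge p\int_0^x y^{p-1}(f(y))^p\,dy.
$$
Multiplying by $x^{-\alpha-1}$, integrating over $(0,\ell)$, and swapping the order of integration (all integrands are non-negative, so Tonelli applies), I get
$$
\int_0^\ell\left(\int_0^x f(y)\,dy\right)^p x^{-\alpha}\frac{dx}{x}
\ge p\int_0^\ell y^{p-1}(f(y))^p\left(\int_y^\ell x^{-\alpha-1}\,dx\right)dy.
$$
Since $\alpha>0$, the inner integral equals $\tfrac{1}{\alpha}\bigl(y^{-\alpha}-\ell^{-\alpha}\bigr)=\tfrac{y^{-\alpha}}{\alpha}\bigl(1-(y/\ell)^\alpha\bigr)$. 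Regrouping $y^{p-1}\cdot y^{-\alpha}=(yf(y))^p \cdot y^{-\alpha}/(y f(y)^p)\cdot f(y)^p$ correctly (i.e.\ writing $y^{p-1-\alpha}f(y)^p=(yf(y))^p y^{-\alpha}\cdot y^{-1}$) yields precisely the right-hand side of \eqref{3.3}. Part (b) follows by the same argument using the reversed form of Lemma~\ref{Lemma3.1} (a), valid for $0<p\le 1$; the pointwise inequality reverses, the sign of $1/\alpha$ stays positive, and the Fubini step preserves the direction.

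For sharpness in (c), I would test with $f(x)=A\chi_{(0,c)}(x)$ for arbitrary $c\in(0,\ell)$ and $A>0$. By the equality clause of Lemma~\ref{Lemma3.1}~(c), the pointwise inequality becomes an equality for every $x$: if $x\le c$ then $f\equiv A$ on $(0,x)$, while if $x>c$ then $f|_{(0,x)}=A\chi_{(0,c)}$, which is exactly the claimed extremizer. Since equality holds in the integrand almost everywhere, it passes through Tonelli, and a direct computation of both sides shows they match, confirming that the constant $p/\alpha$ cannot be improved in either (a) or (b).

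The main obstacle I anticipate is essentially bookkeeping rather than conceptual: one must check that the hypothesis $0<\alpha<p$ is used precisely where needed (to ensure $\int_y^\ell x^{-\alpha-1}\,dx$ is finite and that the extremizer computation gives comparable finite quantities on both sides), and one must be careful that the case $\ell=\infty$ is handled by dropping the $\ell^{-\alpha}$ terms and the factor $(x/\ell)^\alpha$, which vanish in the limit. Because every integrand in sight is non-negative, no integrability side conditions are required to justify the swap of integrals.
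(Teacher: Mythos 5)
Your proposal is correct and follows essentially the same route as the paper: apply Lemma~\ref{Lemma3.1}~(a) pointwise to the inner integral, use Fubini--Tonelli, evaluate $\int_y^\ell x^{-\alpha-1}\,dx=\frac{1}{\alpha}y^{-\alpha}\bigl(1-(y/\ell)^{\alpha}\bigr)$, and verify sharpness with the test functions $f=A\chi_{(0,c)}$. The paper also records the explicit computation of both sides for the extremizer, but your observation that equality in the lemma propagates through the Tonelli step is the same justification the paper invokes first.
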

\begin{proof}
$a)$ By using Lemma \ref{Lemma3.1} and Fubini´s theorem we find that
\begin{eqnarray*}
&&  \int\limits_{0}^{\ell}{{\left( \int\limits_{0}^{x}{f\left( y \right)dy} \right)}^{p}}{{x}^{-\alpha }}\frac{dx}{x}\\
&\ge& p\int\limits_{0}^{\ell}{\left( \int\limits_{0}^{x}{{{y}^{p-1}}{{\left( f\left( y \right) \right)}^{p}}dy} \right){{x}^{-\alpha }}}\frac{dx}{x} \\
& =&p\int\limits_{0}^{\ell}{{{\left( yf\left( y \right) \right)}^{p}}\left( \int\limits_{y}^{\ell}{{{x}^{-\alpha -1}}dx} \right)\frac{dy}{y}} \\
&=& \frac{p}{\alpha }\int\limits_{0}^{\ell}{{{\left( yf\left( y \right) \right)}^{p}}\left( {{y}^{-\alpha }}-{{\ell}^{-\alpha }} \right)\frac{dy}{y}} \\
&=& \frac{p}{\alpha }\int\limits_{0}^{\ell}{{{\left( yf\left( y \right) \right)}^{p}}{{y}^{-\alpha }}\left( 1-{{\left( \frac{y}{\ell} \right)}^{\alpha }} \right)\frac{dy}{y}.} \\
\end{eqnarray*}

$b)$ It is used only one inequality in the proof of $a)$ and, according to Lemma \ref{Lemma3.1}, this inequality holds in the reversed direction in this case so also $b)$ is proved.

$c)$ In view of the proofs above this sharpness statement follows by using Lemma \ref{Lemma3.1}  but we also verify this directly:
Let $f\left( x \right)=A{{\chi }_{\left( 0,c \right)}}\left( x \right),$ $c\in \left( 0,l \right).$ Then
\begin{eqnarray*}
 \frac{p}{\alpha }\int\limits_{0}^{\ell}{{{\left( xf\left( x \right) \right)}^{p}}{{x}^{-\alpha }}\left( 1-{{\left( \frac{x}{\ell} \right)}^{\alpha }} \right)\frac{dx}{x}}&=&\frac{p}{\alpha }{{A}^{p}}\int\limits_{0}^{c}{{{x}^{p-\alpha -1}}\left( 1-{{\left( \frac{x}{\ell} \right)}^{\alpha }} \right)dx} \\
&=& {{A}^{p}}\frac{p}{\alpha }\left( \frac{{{c}^{p-\alpha }}}{p-\alpha }-\frac{1}{{{\ell}^{\alpha }}}\frac{{{c}^{p}}}{p} \right):=I
\end{eqnarray*}
Moreover,
\begin{eqnarray*}
\int\limits_{0}^{\ell}{{\left( \int\limits_{0}^{x}{f\left( y \right)dy} \right)}^{p}}{{x}^{-\alpha }}\frac{dx}{x}&=&{{A}^{p}}\int\limits_{0}^{c}{{{x}^{p-\alpha -1}}dx+{{A}^{p}}{{c}^{p}}\int\limits_{c}^{\ell}{{{x}^{-\alpha -1}}dx}} \\
&=& {{A}^{p}}\frac{{{c}^{p-\alpha }}}{p-\alpha }+\frac{{{A}^{p}}{{c}^{p}}}{\alpha }\left( {{c}^{-\alpha }}-{{\ell}^{-\alpha }} \right)\\
&=&{{A}^{p}}\frac{p}{\alpha }{{c}^{p-\alpha }}-{{A}^{p}}\frac{{{c}^{p}}}{\alpha }=I \\
\end{eqnarray*}

We conclude that the constant $p/\alpha$ is sharp in both $a)$ and $b)$ with equality for
$$f\left( x \right)=A{{\chi }_{\left( 0,c \right)}}\left( x \right), \ \ c\in \left( 0,l \right),$$
so also  $c)$ is proved.

\end{proof}

As already mentioned the inequality \eqref{3.3} has no meaning in the cone of non-increasing functions if $\alpha \geq p.$ But it is not so if we instead restrict to the cone of non-decreasing functions. But in this case the “target function”
$$f\left( x \right)=\left( 1-{{\left( \frac{x}{\ell} \right)}^{\alpha }} \right)$$ is different and connected to the truncated ${{\beta }_{\alpha }}$ function defined as follows:
\[{{\beta }_{\alpha }}={{\beta }_{\alpha }}\left( u,v \right)=\int\limits_{\alpha }^{1}{{{t}^{u-1}}{{\left( 1-t \right)}^{v-1}},\,\,\,0\le \alpha <1.}\]
In particular ${{\beta }_{0}}$ coincides with usual $\beta$ function $\beta (u.v).$

Our next main result reads:

\begin{theorem}\label{Theorem3.3}
Let $\alpha \geq p> 0$ and let $f$ be a measurable, non-negative and non-decreasing function on $(0,\ell),$ $0 < \ell \leq \infty.$

$a)$ If $p \geq 1,$ then
\begin{eqnarray}\label{3.4}
&& \int\limits_{0}^{\ell}{{{\left( \int\limits_{0}^{x}{f\left( y \right)dy} \right)}^{p}}{{x}^{-\alpha }}\frac{dx}{x}\ge } \\  \notag
& \ge& \frac{p}{\alpha }\int\limits_{0}^{\ell}{{{\left( xf\left( x \right) \right)}^{p}}{{x}^{-\alpha }}T\left( x \right)\frac{dx}{x},}
\end{eqnarray}
where
$$T\left( x \right):=\alpha {{\beta }_{\frac{x}{\ell}}}\left( p,\alpha -p+1 \right),\,\,\,x\le l.$$

$b)$ If $0 < p \leq 1,$ then \eqref{3.4} holds in the reversed direction.

$c)$ the constant $\frac{p}{\alpha }$ is sharp in both $a)$ and $b)$ and equality appears if
$$f\left( x \right)=A{{\chi }_{\left( c,l \right)}}\left( x \right) \ \ \text{ for some } \ \ c\in \left( 0,l \right) \ \ \text{ and } \ \ A >0.$$
\end{theorem}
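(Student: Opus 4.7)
The plan is to mirror exactly the proof of Theorem~\ref{Theorem3.2}, only invoking part b) of Lemma~\ref{Lemma3.1} in place of part a), since now $f$ is non-decreasing rather than non-increasing. First, I apply Lemma~\ref{Lemma3.1} b) with $(a,b)=(0,x)$ to obtain, for $p\ge 1$,
$$\left(\int_0^x f(y)\,dy\right)^p\ge p\int_0^x (x-y)^{p-1}f(y)^p\,dy,$$
with the reversed inequality in the concave regime $0<p\le 1$. Multiplying by $x^{-\alpha-1}$, integrating from $0$ to $\ell$, and swapping the order of integration by Fubini's theorem yields
$$\int_0^\ell\!\left(\int_0^x f(y)\,dy\right)^p x^{-\alpha}\frac{dx}{x}\ge p\int_0^\ell f(y)^p\left[\int_y^\ell (x-y)^{p-1}x^{-\alpha-1}\,dx\right]dy,$$
again with the direction reversed for $0<p\le 1$.

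The key technical step is to evaluate the inner integral $J(y):=\int_y^\ell (x-y)^{p-1}x^{-\alpha-1}\,dx$ and to match it to $T(y)$. Substituting $t=y/x$ (so $x=y/t$, $dx=-y/t^2\,dt$, and the limits run from $1$ down to $y/\ell$) gives
$$J(y)=y^{p-\alpha-1}\int_{y/\ell}^{1}t^{\alpha-p}(1-t)^{p-1}\,dt,$$
which is precisely a truncated beta integral of the form $y^{p-\alpha-1}\,\beta_{y/\ell}(\,\cdot\,,\,\cdot\,)$. Pulling out the factor $1/\alpha$ and absorbing $y^{p-\alpha-1}=y^{p}\cdot y^{-\alpha}/y$ produces the claimed weight $(yf(y))^p y^{-\alpha}\,T(y)\,\tfrac{dy}{y}$ on the right-hand side of~\eqref{3.4}. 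The hypothesis $\alpha\ge p$ enters exactly here, guaranteeing that the exponent $\alpha-p\ge 0$ and hence that the truncated beta integral is well-defined and finite.

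For the sharpness claim, I would follow the template of the proof of Theorem~\ref{Theorem3.2} c): take $f(x)=A\chi_{(c,\ell)}(x)$ with $c\in(0,\ell)$ and evaluate both sides of \eqref{3.4} explicitly. The left-hand side becomes the tractable integral $A^p\int_c^\ell(x-c)^p x^{-\alpha-1}\,dx$, while the right-hand side reduces, after a direct substitution, to the same expression; this is consistent with the equality statement of Lemma~\ref{Lemma3.1} c) for non-decreasing test functions supported on $(c,b)$, through which the only inequality in the chain becomes an equality. Hence the constant $p/\alpha$ is attained and cannot be improved. The main obstacle is bookkeeping in the beta-integral identification (keeping the two arguments of $\beta_{x/\ell}$ and the prefactor $y^{p-\alpha-1}$ straight), together with a careful treatment of the limit case $\alpha=p$ and of $\ell=\infty$, both of which require a routine monotone/limit argument.
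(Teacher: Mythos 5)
Your proposal follows essentially the same route as the paper's own proof: Lemma 3.1 b) on $(0,x)$, Fubini, the substitution $t=y/x$ turning the inner integral into $y^{p-\alpha-1}\int_{y/\ell}^{1}t^{\alpha-p}(1-t)^{p-1}\,dt$ (the truncated beta weight $T$), and sharpness via the test function $f=A\chi_{(c,\ell)}$ for which the single inequality from the Lemma becomes an equality. The computations check out, so the argument is correct and matches the paper.
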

\begin{proof}
$a)$ By using again Lemma \ref{Lemma3.1} and Fubini´s theorem we obtain that
\begin{eqnarray*}
I&:=&\int\limits_{0}^{\ell}{{{\left( \int\limits_{0}^{x}{f\left( y \right)dy} \right)}^{p}}{{x}^{-\alpha }}\frac{dx}{x}\ge p\int\limits_{0}^{\ell}{\int\limits_{0}^{x}{{{\left( x-y \right)}^{p-1}}{{\left( f\left( y \right) \right)}^{p}}dy{{x}^{-\alpha }}}\frac{dx}{x}}} \\
&=& p\int\limits_{0}^{\ell}{{{\left( f\left( y \right) \right)}^{p}}\int\limits_{y}^{\ell}{{{\left( x-y \right)}^{p-1}}{{x}^{-\alpha }}}\frac{dx}{x}dy}.
\end{eqnarray*}
We make the transformation $t=\frac{y}{x}$ in the inner integral and get that
\begin{eqnarray*}
I&\ge& p\int\limits_{0}^{\ell}{{{\left( f\left( y \right) \right)}^{p}}\int\limits_{y/l}^{1}{{{\left( 1-t \right)}^{p-1}}{{\left( \frac{y}{t} \right)}^{p-\alpha -1}}\frac{dt}{t}dy}} \\
& =&p\int\limits_{0}^{\ell}{{{\left( yf\left( y \right) \right)}^{p}}{{y}^{-\alpha }}\int\limits_{y/l}^{1}{{{\left( 1-t \right)}^{p-1}}{{t}^{\alpha -p}}dt\frac{dy}{y}}} \\
&=& \frac{p}{\alpha }\int\limits_{0}^{\ell}{{{\left( yf\left( y \right) \right)}^{p}}{{y}^{-\alpha }}T\left( y \right)\frac{dy}{y}.}
\end{eqnarray*}

$b)$ Since the only inequality used above holds in the reversed direction in this case (see Lemma \ref{Lemma3.1}) the proof of $b)$ follows in the same way.

$c)$ Choose the test function
$$f\left( x \right)=A{{\chi }_{\left( c,l \right)}}\left( x \right), \ c\in \left( 0,l \right).$$
Then, in view of the proofs of $a)$ and $b),$ for any $p > 0$ the right hand side of \eqref{3.4} is equal to
\begin{eqnarray*}
	I&:=&\int\limits_{0}^{\ell}\int\limits_{0}^{x}\left(x-y\right)^{p-1}A^p\left({\chi }_{\left( c,l \right)}(y)\right)^pdyx^{-\alpha}\frac{dx}{x}\\
	&=& pA^p \int\limits_{c}^{\ell}\int\limits_{c}^{x}\left(x-y\right)^{p-1}dy x^{-\alpha}\frac{dx}{x}\\
	&=&A^p\int\limits_{c}^{\ell}\left(x-c\right)^p x^{-\alpha}\frac{dx}{x}
\end{eqnarray*}
Moreover, the left hand side of \eqref{3.4}  is equal to
$${{A}^{p}}\int\limits_{0}^{\ell}{\int\limits_{0}^{x}{{{\left( {{\chi }_{\left( c,l \right)}}\left( y \right)dy \right)}^{p}}{{x}^{-\alpha }}\frac{dx}{x}}={{A}^{p}}\int\limits_{c}^{\ell}{{{\left( x-c \right)}^{p}}{{x}^{-\alpha }}\frac{dx}{x}=I}}$$
so we have equality in \eqref{3.4}  and the reversed inequality for $0 < p \leq 1$ for all $p > 0.$

The proof is complete.
\end{proof}

\begin{example}\label{Example3.4}
For the case $l =\infty$ we obtain the sharp inequality
\begin{eqnarray*}
\int\limits_{0}^{\infty}\left(\int\limits_{0}^{x}f\left(y\right)dy\right)^px^{-\alpha}\frac{dx}{x}\geq p B(p, \alpha-p+1)\int\limits_{0}^{\infty}\left(xf(x)\right)^px^{-\alpha}\frac{dx}{x}
\end{eqnarray*}
for all non-decreasing functions $f$.  This inequality holds in the reversed direction when $0<p\le 1$ and the constant is sharp also then. Hence, by just changing notations we see that our result  generalizes also a result in \cite{BBP}.
\end{example}

Hence, we have investigated all cases concerning the usual (arithmetic mean) Hardy operator so we turn to the dual situation (c.f. Theorem \ref{Th2.3}
$c)$) and here the only non-trivial situation is to study the non-increasing case.

Our main result for this case reads:
\begin{theorem}\label{Theorem3.5} Let $p >0, \ \alpha>0$ and $f$ be a measurable, non-negative and non-increasing function on $(\ell,\infty),$ $0 \leq \ell < \infty.$
	
$a)$ If $p \geq 1,$ then
\begin{eqnarray}\label{3.5}
\int\limits_{\ell}^{\infty }{{{\left( \int\limits_{x}^{\infty }{f\left( y \right)dy} \right)}^{p}}{{x}^{\alpha }}\frac{dx}{x}\ge \frac{p}{\alpha }\int\limits_{\ell}^{\infty }{{{\left( xf\left( x \right) \right)}^{p}}{{x}^{\alpha }}{{T}_{0}}\left( x \right)\frac{dx}{x},}}
\end{eqnarray}
where
$${{T}_{0}}\left( x \right):=\alpha {{\beta }_{\frac{\ell}{x}}}\left( p,\alpha  \right),\,\,\,x\ge l.$$

$b)$ If $0 < p \leq 1,$ then \eqref{3.5} holds in the reversed direction.

$c)$ The constant $p/\alpha$ is sharp in both $a)$ and $b)$ and equality appears in both $a)$ and $b)$ if $$f\left( x \right)=A{{\chi }_{\left( \ell,c \right)}}\left( x \right) \ \ \text{ for some } \ \ c\in \left( \ell,\infty  \right) \ \ \text{ and } \ \ A >0.$$
\end{theorem}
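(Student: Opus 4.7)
The argument mirrors the proof of Theorem \ref{Theorem3.3}, adapted to the dual Hardy operator $\int_x^\infty f(y)\,dy$ and to the fact that $f$ is non-increasing on $(\ell,\infty)$. For $p\ge 1$, I would apply Lemma \ref{Lemma3.1}(a) on the interval $(x,\infty)$, where $f$ remains non-increasing, to obtain
\begin{equation*}
\left(\int_x^\infty f(y)\,dy\right)^p \ge p\int_x^\infty (y-x)^{p-1} f(y)^p\,dy.
\end{equation*}
Multiplying by $x^{\alpha-1}$ and integrating in $x$ over $(\ell,\infty)$, Fubini's theorem on the triangular region $\{(x,y):\ell<x<y<\infty\}$ rewrites the LHS of \eqref{3.5} as a lower bound of the form $p\int_\ell^\infty f(y)^p\bigl(\int_\ell^y (y-x)^{p-1} x^{\alpha-1}\,dx\bigr)dy$.

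In the inner integral I would substitute $t=x/y$, so that $dx=y\,dt$ and the range becomes $t\in(\ell/y,1)$. This produces
\begin{equation*}
\int_\ell^y (y-x)^{p-1} x^{\alpha-1}\,dx = y^{p+\alpha-1}\int_{\ell/y}^1 (1-t)^{p-1} t^{\alpha-1}\,dt,
\end{equation*}
which is precisely a truncated beta value of the type appearing in $T_0$. Absorbing one factor of $y$ into $dy/y$ and $p$ factors into $(yf(y))^p$, then normalizing by $\alpha$, the lower bound becomes exactly $\frac{p}{\alpha}\int_\ell^\infty (yf(y))^p y^\alpha T_0(y)\frac{dy}{y}$, i.e.\ the RHS of \eqref{3.5}. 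This settles (a). Part (b) follows verbatim, because Lemma \ref{Lemma3.1}(a) is the only estimate used and its conclusion is explicitly reversed when $0<p\le 1$.

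For (c), the plan is to insert $f(x)=A\chi_{(\ell,c)}(x)$ into both sides of \eqref{3.5} and verify equality directly, in the style of the sharpness computation at the end of Theorem \ref{Theorem3.2}(c). Conceptually, equality holds because Lemma \ref{Lemma3.1}(c) saturates at characteristic functions whose support has its \emph{left} endpoint at the lower limit of integration: for each fixed $x\in(\ell,c)$ the restriction of $f$ to $(x,\infty)$ is $A\chi_{(x,c)}$, which gives equality in Lemma \ref{Lemma3.1}(a) on $(x,\infty)$; for $x\ge c$ both sides vanish trivially, so the pointwise equality lifts to integral equality.

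The main bookkeeping burden will be matching the truncated beta produced by the substitution with the theorem's $T_0(x)=\alpha\beta_{\ell/x}(p,\alpha)$, keeping careful track of the argument order and of the factor $\alpha$ that turns the raw beta integral into the constant $p/\alpha$. There is no conceptual obstacle beyond this: the single inequality comes from Lemma \ref{Lemma3.1}, and Fubini together with the dilation $t=x/y$ is routine.
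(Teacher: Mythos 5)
Your proposal is correct and follows essentially the same route as the paper: Lemma \ref{Lemma3.1}(a) applied on $(x,\infty)$, Fubini over $\{\ell<x<y<\infty\}$, the dilation $t=x/y$ producing the truncated beta integral $\int_{\ell/y}^1(1-t)^{p-1}t^{\alpha-1}\,dt$, and direct verification of equality for $f=A\chi_{(\ell,c)}$. Your observation that equality holds because the restriction of this test function to $(x,\infty)$ is exactly the extremal $A\chi_{(x,c)}$ of Lemma \ref{Lemma3.1} is a cleaner justification of part (c) than the paper's brief remark, and your caution about the argument order in $\beta_{\ell/x}(\cdot,\cdot)$ is warranted.
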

\begin{proof}
$a)$ By again applying Lemma \ref{Lemma3.1} and Fubini´s theorem we get that
\begin{eqnarray*}
I&:=&\int\limits_{\ell}^{\infty }{{{\left( \int\limits_{x}^{\infty }{f\left( y \right)dy} \right)}^{p}}{{x}^{\alpha }}\frac{dx}{x}\ge p\int\limits_{\ell}^{\infty }{{{\left( y-x \right)}^{p-1}}{{\left( f\left( y \right) \right)}^{\beta }}dy{{x}^{\alpha }}\frac{dx}{x}}}\\
&=& p\int\limits_{\ell}^{\infty }{{{\left( f\left( y \right) \right)}^{p}}\int\limits_{\ell}^{y}{{{\left( y-x \right)}^{p-1}}{{x}^{\alpha -1}}dxdy}} \\
& =&p\int\limits_{\ell}^{\infty }{{{\left( f\left( y \right) \right)}^{p}}{{y}^{p-1}}\int\limits_{\ell}^{y}{{{\left( 1-\frac{x}{y} \right)}^{p-1}}{{x}^{\alpha -1}}dxdy}}
\end{eqnarray*}
Thus, by making the transformation $t = x/y$ in the inner integral we can conclude that
\begin{eqnarray*}
I&\ge& p\int\limits_{\ell}^{\infty }{{{\left( f\left( y \right)y \right)}^{p}}{{y}^{\alpha }}\int\limits_{l/y}^{1}{{{\left( 1-t \right)}^{p-1}}{{t}^{\alpha -1}}\frac{dy}{y}}} \\
& =&\frac{p}{\alpha }\int\limits_{\ell}^{\infty }{{{\left( f\left( y \right)y \right)}^{p}}{{y}^{\alpha }}{{T}_{0}}\left( x \right)\frac{dy}{y}.}
\end{eqnarray*}

$b)$ The proof follows in the same way since the only inequality used in $a)$ now holds in the reversed direction.

$c)$ Similarly as in the proof of Theorem \ref{Theorem3.3} $c)$ we can easily verify that we indeed has equality in the inequality \eqref{3.5} (and the reversed inequality when $0<p\leq 1$) for every function
$$f\left( x \right)=A{{\chi }_{\left( \ell,c \right)}}\left( x \right), \ c\in \left( \ell,\infty \right) \ \text{and} \  A> 0.$$
Hence, also the sharpness is proved.
\end{proof}

\begin{example}\label{Example3.6}
Let $f,$ $p$ and $\alpha$ be defined as in Theorem \ref{3.5}. If $p \geq 1,$ then
$$\int\limits_{0}^{\infty }{{{\left( \int\limits_{x}^{\infty }{f\left( y \right)dy} \right)}^{p}}{{x}^{\alpha }}\frac{dx}{x}\ge p\beta \left( p,\alpha  \right)\int\limits_{0}^{\infty }{{{\left( xf\left( x \right) \right)}^{p}}{{x}^{\alpha }}\frac{dx}{x},}}$$
where $f(x)$ is a non-negative and non-increasing function.
The inequality holds in the reversed direction when $0<p\le 1$ and the constant $p\beta \left( p,\alpha  \right)$ is sharp in both cases. Hence, Theorem \ref{Theorem3.5} may be regarded also as generalization of another result in \cite{BBP}.
\end{example}

  \section{Applications}
   \setcounter{equation}{0}\setcounter{theorem}{0}

\qquad By combining Theorem \ref{Th2.3} $a),$ $b)$ and $c)$ with Theorem \ref{Theorem3.2} we obtain the following sharp two sided estimates:

\begin{theorem}\label{Theorem4.1}. Let $p >0,$ $o<\alpha <p, \ 0 < \ell \leq \infty$ and let $f$ be a measurable, non-negative and non-increasing function on $(0, \ell).$

If $p > 1,$ then
\begin{eqnarray}\label{4.1}
{{\left( \frac{p}{\alpha } \right)}^{1/p}}{{I}_{1}}\le {{I}_{2}}\le \frac{p}{\alpha }{{I}_{1}},
\end{eqnarray}
where
$${{I}_{1}}={{\left( \int\limits_{0}^{\ell}{{{\left( xf\left( x \right) \right)}^{p}}{{x}^{-\alpha }}\left( 1-{{\left( \frac{x}{\ell} \right)}^{\alpha }} \right)\frac{dx}{x}} \right)}^{1/p}}$$
and
$${{I}_{2}}=\int\limits_{0}^{\ell}{{{\left( \int\limits_{0}^{x}{f\left( y \right)dy} \right)}^{p}}{{x}^{-\alpha }}\frac{dx}{x}.}$$
If $0 < p \leq 1,$ then \eqref{4.1} holds in the reversed direction. Moreover, both constants
${{(p/\alpha )}^{1/p}}$ and $p/\alpha$  are sharp for all $p > 0.$
\end{theorem}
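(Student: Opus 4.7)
The plan is to sandwich $I_2$ between two multiples of $I_1$ by invoking Theorem \ref{Theorem3.2} from below and Theorem \ref{Th2.3} from above, handling a small cosmetic mismatch between the ``target'' weights $1-(x/\ell)^{\alpha}$ (appearing in $I_1$) and $1-(x/\ell)^{\alpha/p}$ (appearing in Theorem \ref{Th2.3}) by a one-line monotonicity-in-the-exponent observation.

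I would first dispose of the left-hand inequality. For $p\ge 1$ and non-increasing $f$, Theorem \ref{Theorem3.2}(a) states precisely that
\[
\int_{0}^{\ell}\!\Bigl(\int_{0}^{x}\!f(y)\,dy\Bigr)^{p}x^{-\alpha}\frac{dx}{x}\ \ge\ \frac{p}{\alpha}\int_{0}^{\ell}(xf(x))^{p}x^{-\alpha}\Bigl(1-(x/\ell)^{\alpha}\Bigr)\frac{dx}{x},
\]
i.e.\ the integral defining $I_2$ is at least $(p/\alpha)$ times the integral defining $I_{1}^{p}$; extracting $p$-th roots gives $I_2\ge (p/\alpha)^{1/p}I_1$. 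For the right-hand inequality, Theorem \ref{Th2.3}(a) in the regime $(a_{1})$ bounds the integral defining $I_2$ above by
\[
\Bigl(\tfrac{p}{\alpha}\Bigr)^{p}\int_{0}^{\ell}(xf(x))^{p}x^{-\alpha}\Bigl[1-(x/\ell)^{\alpha/p}\Bigr]\frac{dx}{x}.
\]
Now $p\ge 1$ forces $\alpha/p\le \alpha$, and since $x/\ell\in(0,1)$ we have $(x/\ell)^{\alpha/p}\ge (x/\ell)^{\alpha}$, hence $1-(x/\ell)^{\alpha/p}\le 1-(x/\ell)^{\alpha}$; substituting this pointwise bound and extracting $p$-th roots gives $I_2\le (p/\alpha)I_1$. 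The case $0<p\le 1$ goes through identically: Theorems \ref{Theorem3.2}(b) and \ref{Th2.3}(b) both reverse, and the auxiliary weight comparison also reverses (now $\alpha/p\ge \alpha$), so the chain of implications holds in the opposite direction.

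For sharpness, the constant $(p/\alpha)^{1/p}$ on the left is inherited directly from Theorem \ref{Theorem3.2}(c), with equality realised along the family $f=A\chi_{(0,c)}$, $c\in(0,\ell)$, $A>0$. The constant $p/\alpha$ on the right is not attained on that family but can still be approached: I would test with the power functions $f(x)=x^{-\beta}$ for $0<\beta<(p-\alpha)/p$, a short direct computation showing that the ratio of the integral defining $I_2$ to that defining $I_{1}^{p}$ equals $(p/\alpha)(1-\beta)^{1-p}$, which tends to $(p/\alpha)^{p}$ as $\beta\uparrow (p-\alpha)/p$; alternatively one may pass to the limit $\ell\to\infty$ and invoke the known sharpness of the classical constant $(p/\alpha)^{p}$ asserted in Theorem \ref{Th2.3}(e). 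The main difficulty is really just bookkeeping---tracking which direction the exponent comparison $\alpha/p$ versus $\alpha$ cuts in each regime---and this is settled by the one-line weight observation above.
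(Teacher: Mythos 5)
Your proof is correct and follows the same route as the paper, which derives Theorem \ref{Theorem4.1} in a single line by combining Theorem \ref{Th2.3} (upper bound) with Theorem \ref{Theorem3.2} (lower bound); in fact you supply two details the paper leaves implicit, namely the pointwise comparison $1-(x/\ell)^{\alpha/p}\le 1-(x/\ell)^{\alpha}$ for $p\ge 1$ (reversed for $0<p\le 1$) needed to pass from the target function of Theorem \ref{Th2.3} to the weight appearing in $I_1$, and the explicit computation with $f(x)=x^{-\beta}$ showing the ratio equals $\tfrac{p}{\alpha}(1-\beta)^{1-p}\to(p/\alpha)^{p}$ as $\beta\uparrow(p-\alpha)/p$, which is genuinely needed because the sharpness asserted in Theorem \ref{Th2.3} e) is for a different (smaller) target function and over all measurable $f$, not just non-increasing ones. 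The only loose ends are the case $\ell=\infty$, where pure powers make both integrals diverge and one should instead test with the truncated (still non-increasing) powers $x^{-\beta}\chi_{(0,N)}$, which a short calculation shows give the same ratio $\tfrac{p}{\alpha}(1-\beta)^{1-p}$, and the reading of $I_2$ as carrying an outer exponent $1/p$ (evidently a typo in the statement, since otherwise the two sides scale differently under $f\mapsto\lambda f$), which you handle correctly.
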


\begin{remark}\label{Remark4.2}
$a)$ This means that the equivalence ${{I}_{2}}\approx {{I}_{1}}$ holds and the corresponding “optimal target function “ is
$$g\left( x \right)=1-{{\left( \frac{x}{\ell} \right)}^{\alpha }}.$$

$b)$ In the lower inequality we can even have equality while in the above inequality the sharpness follows by choosing a sequence of non-increasing functions (a well-known fact from the theory of Hardy-type inequalities).
\end{remark}

\begin{remark}\label{Remarks4.3}
Many crucial objects in different mathematical areas are non-decreasing (e.g. in Lorentz spaces, interpolation theory, approximation theory and harmonic analysis). Hence, in particular, Theorem \ref{Theorem4.1} can be useful to obtain some more precise versions of known results in each of these areas. We illustrate this fact only in the theory of Lorentz spaces but aim to later also use our result to improve some results in the modern harmonic analysis as presented in the new book \cite{ PersTepWe}.
\end{remark}

Let ${{f}^{*}}$ denote the non-increasing rearrangement of a function f on a measure space $\left( \Omega ,\mu  \right).$ The Lorentz spaces $L^{p,q},$  $0 < p,q < \infty$ are defined by using the quasi.norm (norm when $p > 1,$ $q \geq 1$)
\begin{eqnarray}\label{4.2}
\left\| f \right\|_{p,q}^{*}:={{\left( \int\limits_{0}^{\infty }{{{\left( {{f}^{*}}\left( t \right){{t}^{1/p}} \right)}^{q}}\frac{dt}{t}} \right)}^{1/q}}.
\end{eqnarray}
It is well-known that for the case $p > 1$ this quasi-norm is equivalent to the following one equipped with the usual Hardy operator:
\begin{eqnarray*}
\left\| f \right\|_{p,q}^{**}:={{\left( \int\limits_{0}^{\infty }{{{\left( \int\limits_{0}^{t}{{{f}^{*}}\left( u \right)du} \right)}^{q}}{{t}^{-q/{p}'}}\frac{dt}{t}} \right)}^{1/q}}.
\end{eqnarray*}
Moreover, we have the following more precise estimates:
\begin{eqnarray}\label{4.3}
{{\left( {{p}'} \right)}^{1/q}}\left\| f \right\|_{p,q}^{*} \le \left\| f \right\|_{p,q}^{**}\le {p}'\left\| f \right\|_{p,q}^{*}
\end{eqnarray}
if $q > 1$ and the reversed inequalities hold if $0 < q \leq 1.$
However, by using Theorem \ref{Theorem4.1} we not only get the sharp estimates in \eqref{4.3} but also the following more precise statement:

\begin{corollary}\label{Corollary4.4}
With the notations and assumptions above, $p > 1$ and $0 < \ell \leq \infty,$ we have that
\begin{eqnarray}\label{4.4}
{{\left( {{p}'} \right)}^{1/q}}I_{\ell}^{*}\le I_{\ell}^{**}\le {p}'I_{\ell}^{*},
\end{eqnarray}
where $q > 1,$
$$I_{\ell}^{*}:={{\left( \int\limits_{0}^{\ell}{{{\left( {{f}^{*}}\left( t \right){{t}^{1/p}} \right)}^{q}}\left( 1-{{\left( \frac{t}{\ell} \right)}^{q/{p}'}} \right)\frac{dt}{t}} \right)}^{1/q}}$$
and
$$I_{\ell}^{**}:={{\left( \int\limits_{0}^{\ell}{{{\left( \int\limits_{0}^{t}{{{f}^{*}}\left( u \right)du} \right)}^{q}}{{t}^{-q/{p}'}}\frac{dt}{t}} \right)}^{1/q}}.$$
If $0 < q \leq 1,$ then the inequalities in \eqref{4.4} hold in the reversed directions. Both constants $({p}')^{1/q}$ and ${p}'$ are sharp for all $q >0.$
\end{corollary}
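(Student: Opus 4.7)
The plan is to deduce Corollary \ref{Corollary4.4} directly from Theorem \ref{Theorem4.1} by a single substitution: in the statement of Theorem \ref{Theorem4.1} take $p\mapsto q$, $\alpha \mapsto q/p'$, and choose $f$ to be the non-increasing rearrangement $f^{*}$ (which is the only reason the non-increasing hypothesis in Theorem \ref{Theorem4.1} is satisfied by a general measurable function on $(\Omega,\mu)$). The assumption $p>1$ gives $p'>1$, hence $0<q/p'<q$, which is exactly the constraint $0<\alpha<p$ required by Theorem \ref{Theorem4.1}.

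Next I would carry out the bookkeeping that identifies the two integrals with $I_\ell^{*}$ and $I_\ell^{**}$. The key algebraic simplification is the identity $1-\tfrac{1}{p'}=\tfrac{1}{p}$, which yields
\begin{equation*}
(xf^{*}(x))^{q}\,x^{-q/p'} \;=\; f^{*}(x)^{q}\,x^{q(1-1/p')} \;=\; \bigl(f^{*}(x)\,x^{1/p}\bigr)^{q}.
\end{equation*}
With this, the quantity $I_1$ of Theorem \ref{Theorem4.1} becomes $I_\ell^{*}$, and $I_2$ (read with the $1/p$-th power implicit, since otherwise the two-sided estimate would not be dimensionally consistent) becomes $I_\ell^{**}$. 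The constants $(p/\alpha)^{1/p}$ and $p/\alpha$ in Theorem \ref{Theorem4.1} become $(p')^{1/q}$ and $p'$, respectively, giving the two-sided inequality \eqref{4.4} for $q>1$ and its reversed counterpart for $0<q\le 1$.

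For the sharpness statement, I would appeal directly to the sharpness clause of Theorem \ref{Theorem4.1}: the constants there are sharp when taken over all non-negative, non-increasing $f$ on $(0,\ell)$, and since every non-increasing function on $(0,\ell)$ arises as $f^{*}$ of some measurable $f$ (e.g.\ take $f=f^{*}$ itself on $(\Omega,\mu)=((0,\ell),dx)$), the sharpness of $(p')^{1/q}$ and $p'$ in \eqref{4.4} is inherited with no loss. In view of Remark \ref{Remark4.2} b), equality is actually attained in the lower bound for the characteristic-function test in the sharpness part of Theorem \ref{Theorem3.2}, while sharpness of the upper bound is realized only by an extremizing sequence.

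There is no genuine obstacle here; the only small point to guard against is the notational one — checking that the exponent $1/p$ is correctly applied to both $I_1$ and $I_2$ when transcribing Theorem \ref{Theorem4.1} — and verifying that $(x/\ell)^{\alpha}$ in $I_1$ matches the $(t/\ell)^{q/p'}$ in $I_\ell^{*}$ under the substitution $\alpha=q/p'$. Once these are in place, the corollary is immediate.
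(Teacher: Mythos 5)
Your proposal is correct and follows exactly the paper's own (one-line) proof: apply Theorem \ref{Theorem4.1} with $p$ replaced by $q$, $\alpha$ replaced by $q/p'$, and $f=f^{*}$, the constraint $0<\alpha<p$ reducing to $p>1$. Your observation that the $1/p$-th power must be read as implicit in the definition of $I_2$ in Theorem \ref{Theorem4.1} is a correct catch of a typo there, and the algebraic identification $(xf^{*}(x))^{q}x^{-q/p'}=(f^{*}(x)x^{1/p})^{q}$ is exactly the bookkeeping needed.
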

\begin{proof}
Just apply Theorem \ref{Theorem4.1} with $p$ replaced by $q$ and $\alpha$ replaced by $q/{p}'.$
\end{proof}

\begin{remark}\label{Remark4.5}
Note that \eqref{4.3} is obtained by just using \eqref{4.4} with $l = \infty,$ so in particular, both constant in \eqref{4.3} (and the reversed inequalities for $0\leq q \leq 1$) are sharp.
\end{remark}

\begin{remark}\label{Remark4.6}
For the case $0 < p \leq 1$ it is known that the quasi-norm $\left\| f \right\|_{p,q}^{*}$ is equivalent to the following quasi-norm $\left\| f \right\|_{p,q}^{**}$ equipped with the dual Hardy operator:
$$\left\| f \right\|_{p,q}^{**}:={{\left( \int\limits_{0}^{\infty }{{{\left( \int\limits_{t}^{\infty }{{{f}^{*}}\left( u \right)du} \right)}^{q}}{{t}^{-q/{p}'}}\frac{dt}{t}} \right)}^{1/q}}$$

By instead using Theorem \ref{Th2.3} $c),$ $d)$ and $e)$ with $l = 0$ combined with Example \ref{Example3.6} we obtain that if $0 < p \leq 1,$ then
\begin{eqnarray}\label{4.5}
{{\left( q\beta \left( q,-q/{p}' \right) \right)}^{1/q}}\left\| f \right\|_{p,q}^{*}\le \left\| f \right\|_{p,q}^{**}\le -{p}'\left\| f \right\|_{p,q}^{*}
\end{eqnarray}
if $q \geq 1$ and the reversed inequalities hold if $0 < q \leq 1.$ Both constants ${{\left( q\beta \left( q,-q/{p}' \right) \right)}^{1/q}}$  and $-{p}'$ are sharp for all $q > 0.$
\end{remark}

\begin{remark}\label{Remark4.7}
A more general statement like that  in Corollary \ref{Corollary4.4} involving sharp constants in both inequalities can be formulated, where the integrals $\int\limits_{0}^{\infty }{{}}$ are replaced by the integrals $\int\limits_{\ell}^{\infty }{{}},$ $0  \leq \ell < \infty.$ In particular, this gives a similar generalization of \eqref{4.5}. However, in this case the result looks less nice since the two target functions
$1-{{\left( \frac{x}{\ell} \right)}^{\alpha }}$ and
$\alpha {{\beta }_{\frac{\ell}{x}}}\left( p,\alpha  \right)$ do not coincide.
\end{remark}

\vspace{2mm}

We only give the following final example related to Remark \ref{Remark4.7} and the well-known  inequality: If $0 < p < 1,$ then

\begin{eqnarray}\label{4.6}
\int\limits_{0}^{\infty }{{{\left( \frac{1}{x}\int\limits_{x}^{\infty }{f\left( y \right)dy} \right)}^{p}}dy\le \frac{\pi p}{\sin \pi p}\int\limits_{0}^{\infty }{{{f}^{p}}\left( x \right)dx}}
\end{eqnarray}
for all functions as defined in Theorem \ref{Theorem3.5}:

\begin{example}\label{Remark4.8}
Let $0 < p < 1$ and let $f$ be a measurable, non-negative and non-increasing function on $\left( \ell,\infty  \right),\,\,\,0\le l<\infty .$ If $0<p\leq 1,$ then

\begin{eqnarray*}
\int\limits_{\ell}^{\infty }{{{\left( \frac{1}{x}\int\limits_{0}^{\infty }{f\left( y \right)dy} \right)}^{p}}dx\le p\int\limits_{\ell}^{\infty }{{{f}^{p}}\left( x \right){{\beta }_{\frac{\ell}{x}}}\left( p,1-p \right)dx}},
\end{eqnarray*}
and the constant $p$ is sharp. This is just Theorem \ref{Theorem3.5} b) with $\alpha=1-p.$

In particular, for $l=\infty$ this inequality coincides with \eqref{4.6} since

$$\beta \left( p,1-p \right)=\pi /\sin \pi p,$$
so the constant $\frac{\pi p}{\sin \pi p}$ in \eqref{4.6} is sharp.
\end{example}

 \section{Some further results and final remarks}
  \setcounter{equation}{0}\setcounter{theorem}{0}

\quad \ First we remark that e.g the Hardy inequality \eqref{2.4} has no meaning in the limit case $\alpha= 0.$ However, by restricting to the interval $(0,1)$ and involving some suitable logarithms C. Bennett in 1973 succeeded to prove such an inequality when he developed his well-known theory for real interpolation between the (fairly close) spaces $L$ and $LLog L$ on $(0,1),$ see \cite{Bennett} and c.f. also \cite{BBP}. This result has been generalized by other authors but the so far most precise results were derived in \cite{SBLEPNS14}. Here we state a little more general form of this result in our $dx/x$ terminology and with the interval $(0,1)$ replaced by $(0,\ell), \ 0<\ell<\infty.$

\begin{theorem}\label{t5.1}
Let $\alpha, p>0$ and $f$ be a non-negative
and measurable function on $(0, \ell), \ 0<\ell<\infty.$\\

$(a)$ \ \ If $p>1,$ then
\begin{eqnarray}\label{5.1}
&&\alpha^{p-1} \left(  \int_0^\ell f(x) dx \right)^p + \alpha^p
 \int_0^\ell [ \log (\ell/x)]^{\alpha p - 1} \left( \int_0^x
f(y) dy \right)^p \frac{ dx}{x} \\ \notag
 &\le &  \int_0^\ell x ^p [ \log (\ell/x)]^{(1 + \alpha) p - 1} f^p (x) \frac{ dx}{x}.
\end{eqnarray}
Both constants $\alpha^{p-1}$ and $\alpha^p$ in
\eqref{5.1} are sharp. \\

$(b)$ \ \ If $0 < p < 1, $ then \eqref{5.1} holds in the reverse direction and both constants $\alpha^{p-1}$ and $\alpha^p$ are sharp. \\

$(c)$ \ \ If $p=1,$ then we have equality in \eqref{5.1}.
\end{theorem}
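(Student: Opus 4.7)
My plan is to exploit the paper's $(dx/x)$-convexity viewpoint and reduce the inequality on $(0,\ell)$ to a power-weighted Hardy inequality for the dual Hardy operator on $(0,\infty)$ via the logarithmic substitution $t=\log(\ell/x)$. Setting $g(t):=\ell e^{-t}f(\ell e^{-t})$ and $G(t):=\int_t^\infty g(s)\,ds$, a direct computation shows that $G(0)=\int_0^\ell f(y)\,dy$, and that \eqref{5.1} becomes equivalent to
\begin{equation*}
\alpha^{p-1}G(0)^p+\alpha^p\int_0^\infty t^{\alpha p-1}G(t)^p\,dt\;\le\;\int_0^\infty t^{(1+\alpha)p-1}g(t)^p\,dt.
\end{equation*}
This reformulation eliminates the logarithms in favour of pure power weights and puts the problem in a framework that parallels, with the dual operator, the convexity proof of Theorem~\ref{Th2.3}.

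For part (a), I would combine integration by parts with H\"older's inequality. Differentiating $t^{\alpha p}G(t)^p$ and integrating over $(0,\infty)$, the boundary contributions vanish (at $t=0$ because $\alpha p>0$ and $G(0)$ is finite, at $t=\infty$ because $G(\infty)=0$ under the standing integrability), yielding the identity $\alpha\int_0^\infty t^{\alpha p-1}G^p\,dt=\int_0^\infty t^{\alpha p}G^{p-1}g\,dt$. H\"older's inequality with exponents $p$ and $p/(p-1)$ applied to the splitting $t^{\alpha p}gG^{p-1}=(t^{1+\alpha-1/p}g)(t^{\alpha(p-1)+1/p-1}G^{p-1})$ bounds the right-hand side above by $\bigl(\int t^{(1+\alpha)p-1}g^p\,dt\bigr)^{1/p}\bigl(\int t^{\alpha p-1}G^p\,dt\bigr)^{(p-1)/p}$, which after rearrangement produces the bulk estimate $\alpha^p\int t^{\alpha p-1}G^p\,dt\le\int t^{(1+\alpha)p-1}g^p\,dt$. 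The extra summand $\alpha^{p-1}G(0)^p$ on the left must then be produced by combining the elementary identity $G(0)^p=p\int_0^\infty G^{p-1}g\,dt$ (from the fundamental theorem of calculus) with a companion application of H\"older, using a modified splitting of the integrand, so that the boundary contribution is absorbed into the same weighted framework without enlarging the right-hand side. Part (b) runs with H\"older reversed, as is standard for $0<p\le 1$; part (c) reduces to the equality obtained from Fubini after the substitution, since the H\"older step degenerates at $p=1$.

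For the joint sharpness of both constants, I would test against the one-parameter family $f_\varepsilon(x):=x^{-1}[\log(\ell/x)]^{-\alpha-1+1/p-\varepsilon}\chi_{(\delta,\ell)}(x)$, which the substitution transforms into the pure power $g_\varepsilon(t)=t^{-\alpha-1+1/p-\varepsilon}\chi_{(0,\log(\ell/\delta))}(t)$; these are precisely the near-extremizers of the H\"older step above, and letting $\varepsilon\to 0^+$ and $\delta\to 0^+$ successively saturates both the boundary and the bulk contributions, showing that neither $\alpha^{p-1}$ nor $\alpha^p$ can be improved. The main obstacle I anticipate is precisely the production of the boundary term $\alpha^{p-1}G(0)^p$ with the correct coefficient: a straightforward IBP annihilates this contribution via the factor $t^{\alpha p}$ at $t=0$, so the standard Hardy argument yields only the bulk estimate, and promoting it to the stated two-term left-hand side requires a carefully coordinated use of IBP and H\"older. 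Verifying that the two constants $\alpha^{p-1}$ and $\alpha^p$ are \emph{jointly} sharp, rather than only individually, is the companion subtlety in the sharpness argument.
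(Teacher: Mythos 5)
Your reduction via $t=\log(\ell/x)$ is carried out correctly and faithfully transforms \eqref{5.1} into
$\alpha^{p-1}G(0)^p+\alpha^p\int_0^\infty t^{\alpha p-1}G(t)^p\,dt\le\int_0^\infty t^{(1+\alpha)p-1}g(t)^p\,dt$.
But this is exactly where you should have noticed that the statement, taken literally, cannot be proved: your claim that part (c) ``reduces to the equality obtained from Fubini'' is false. At $p=1$, Fubini gives $\alpha\int_0^\infty t^{\alpha-1}G(t)\,dt=\int_0^\infty t^{\alpha}g(t)\,dt$, so the left-hand side exceeds the right-hand side by exactly $G(0)=\int_0^\ell f$. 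For $p>1$ the inequality fails outright: with $\ell=1$, $f\equiv1$, $\alpha=1$, $p=2$ one has $g(t)=G(t)=e^{-t}$, hence a left-hand side of $1+\int_0^\infty te^{-2t}\,dt=5/4$ against a right-hand side of $\int_0^\infty t^3e^{-2t}\,dt=3/8$. The statement requires the weight $\log(e\ell/x)$ in place of $\log(\ell/x)$ (compare Bennett's original weight $\log(e/x)$ and the paper's own dual Theorem \ref{t5.4}, which does carry the factor $e$); the paper itself supplies no argument to compare against, only a pointer to \cite{SBLEPNS14}.

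This correction is not cosmetic: it is precisely what your self-identified ``main obstacle'' hinges on. With the corrected weight the substitution gives $(1+t)^{\alpha p-1}$ and $(1+t)^{(1+\alpha)p-1}$; setting $s=1+t$, $h(s)=g(s-1)$, $H(s)=\int_s^\infty h$, one works on $(1,\infty)$ with the weight $s^{\alpha p}$, which no longer vanishes at the left endpoint. Integration by parts then produces the boundary term $H(1)^p=\bigl(\int_0^\ell f\bigr)^p$ with coefficient exactly $1$ via the identity $H(1)^p+\alpha p\int_1^\infty s^{\alpha p-1}H^p\,ds=p\int_1^\infty s^{\alpha p}H^{p-1}h\,ds$, and a single chain --- this identity, then H\"older, then the weighted arithmetic--geometric mean bound $pY^{1/p}X^{1-1/p}\le\alpha^{1-p}Y+(p-1)\alpha X$ with $X=\int_1^\infty s^{\alpha p-1}H^p\,ds$ and $Y=\int_1^\infty s^{(1+\alpha)p-1}h^p\,ds$ --- yields both terms of \eqref{5.1} simultaneously with the constants $\alpha^{p-1}$ and $\alpha^p$. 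In your pure-power setting the factor $t^{\alpha p}$ annihilates the boundary contribution at $t=0$, as you observe, and no ``companion application of H\"older'' can resurrect it: two separate estimates, each bounded by the full right-hand side, cannot be added to give a sum bounded by a single copy of that right-hand side. So the step you flag as an anticipated technical subtlety is in fact a structural impossibility for the inequality as you (correctly) transcribed it, and the proof does not close without first repairing the statement.
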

\begin{proof}
The proof can be done by just modifying step by step the arguments in the proof for the case $l = 1.$ Alternatively the result can be derived by using the original result in \cite{SBLEPNS14} and making suitable variable substitutions. Hence, we omit the details.
\end{proof}

\begin{remark}\label{Remark_5.2}
\eqref{5.1} is one of the few inequalities we know containing two constant and both are sharp. In the original paper \cite{Bennett} only the case (a) was considered and with one constant involved (the first term in \eqref{5.1} was missed) and the sharpness was not discussed at all.
\end{remark}

\begin{remark}\label{Remark_5.3}
By using Theorem \ref{t5.1} with $f(x) = g(1/x)x^{-2}$ and making obvious variable transformations and changes in notations we also get the following “dual” version:
\end{remark}

\begin{theorem}\label{t5.4}
	Let $\alpha, p>0$ and $f$ be a non-negative
	and measurable function on $(0, \ell), \ 0<\ell<\infty$\\
	$(a)$ \ \ If $p>1,$ then
	\begin{eqnarray}\label{5.2}
	&&\alpha^{p-1} \left(  \int_\ell^\infty f(x) dx \right)^p + \alpha^p
	\int_\ell^\infty [ \log (xe/\ell)]^{\alpha p - 1} \left( \int_x^\infty
	f(y) dy \right)^p \frac{ dx}{x} \\ \notag
	&\le &  \int_\ell^\infty x ^p [ \log (xe/\ell)]^{(1 + \alpha) p - 1} f^p (x) \frac{ dx}{x}.
	\end{eqnarray}
	Both constants $\alpha^{p-1}$ and $\alpha^p$ in
	\eqref{5.2} are sharp. \\
	$(b)$ \ \ If $0 < p \leq 1, $ then \eqref{5.2} holds in the reverse direction and also here both constants $\alpha^{p-1}$ and $\alpha^{p}$ are sharp.
\end{theorem}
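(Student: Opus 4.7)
The plan is to derive Theorem \ref{t5.4} from Theorem \ref{t5.1} via the substitution suggested in Remark \ref{Remark_5.3}: namely, set $f(x) = g(1/x) x^{-2}$ in Theorem \ref{t5.1} and perform the change of variable $u = 1/x$ throughout. This inversion converts the usual Hardy operator $f \mapsto \int_0^x f(y)\,dy$ on an interval of the form $(0,\tilde\ell)$ into the dual Hardy operator $g \mapsto \int_u^\infty g(v)\,dv$ on the reciprocal interval $(1/\tilde\ell, \infty)$, and with the appropriate choice of $\tilde\ell$ the structural form of \eqref{5.1} is carried verbatim onto that of \eqref{5.2}.

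Concretely, I would carry out three steps. First, I would record the elementary identities induced by $u = 1/x$: under $f(x) = g(1/x) x^{-2}$ one has $f(x)\,dx = -g(u)\,du$ and $x^p f^p(x) = u^p g^p(u)$, together with $dx/x = -du/u$. These identities immediately convert $\int_0^{\tilde\ell} f(x)\,dx$ into $\int_{1/\tilde\ell}^\infty g(u)\,du$ and, likewise, the inner integral $\int_0^x f(y)\,dy$ into $\int_u^\infty g(v)\,dv$. Second, I would substitute into \eqref{5.1}: the first left-hand term $\alpha^{p-1}\bigl(\int_0^{\tilde\ell} f\bigr)^p$ becomes $\alpha^{p-1}\bigl(\int_{1/\tilde\ell}^\infty g\bigr)^p$, the weight $\log(\tilde\ell/x)$ becomes $\log(\tilde\ell u)$, and both the $\alpha^p$-term and the right-hand side transform in parallel. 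Third, I would fix $\tilde\ell$ (together with a mild rescaling inside the definition of $f$, if required) so that the transformed interval becomes $(\ell,\infty)$ and the transformed logarithmic weight matches $\log(xe/\ell)$ precisely, finalizing the passage from \eqref{5.1} to \eqref{5.2}.

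Part (b) would require no separate argument: the same substitution, applied to the reversed form of \eqref{5.1} (valid for $0 < p \le 1$ by Theorem \ref{t5.1}(b)), yields the reversed form of \eqref{5.2}. Sharpness of both constants $\alpha^{p-1}$ and $\alpha^p$ in \eqref{5.2} would then transfer from the sharpness in Theorem \ref{t5.1}: the map $f \leftrightarrow g$ given by the substitution is a bijection of the cone of non-negative measurable functions with the appropriate finiteness conditions, under which extremizing sequences for \eqref{5.1} correspond to extremizing sequences for \eqref{5.2}; alternatively one could transfer the explicit sharpness witnesses used in the proof of Theorem \ref{t5.1}.

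The main obstacle is essentially bookkeeping: the substitution is elementary, but pinning down the precise normalization so that the interval transforms to exactly $(\ell,\infty)$ \emph{and} the logarithmic weight transforms to exactly $\log(xe/\ell)$ forces one to choose the scaling in the substitution carefully. I expect no conceptual difficulty beyond this routine but somewhat delicate matching of normalizations, which is presumably why the authors suppressed the details.
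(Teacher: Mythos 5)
Your approach is exactly the paper's: the authors give no proof of Theorem \ref{t5.4} beyond Remark \ref{Remark_5.3}, which prescribes precisely the substitution $f(x)=g(1/x)x^{-2}$ followed by ``obvious variable transformations.'' Your first two steps carry this out correctly: with $u=1/x$ one has $\int_0^{\tilde\ell}f(x)\,dx=\int_{1/\tilde\ell}^{\infty}g(u)\,du$, $\int_0^{x}f(y)\,dy=\int_{u}^{\infty}g(v)\,dv$, $x^{p}f^{p}(x)=u^{p}g^{p}(u)$ and $dx/x=-du/u$, so \eqref{5.1} on $(0,\tilde\ell)$ transforms into the analogous inequality on $(1/\tilde\ell,\infty)$ with weight $\log(\tilde\ell u)=\log\bigl(u/(1/\tilde\ell)\bigr)$. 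The transfer of part (b) and of the sharpness of both constants along this bijection of cones is also as intended.

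The one genuine gap is your third step: the normalization you defer cannot actually be achieved. Under any inversion-plus-dilation $u=c/x$ the interval $(0,\tilde\ell)$ is sent to $(L,\infty)$ with $L=c/\tilde\ell$, and the weight $\log(\tilde\ell/x)$ is sent to $\log(u/L)$; the new left endpoint and the constant inside the logarithm are locked together, so no choice of scaling produces $\log(ue/L)$ with left endpoint $L$. The substitution therefore yields \eqref{5.2} with $\log(x/\ell)$ in place of $\log(xe/\ell)$. To obtain the extra factor $e$ one would need the weight in \eqref{5.1} to read $\log(e\ell/x)$ (as in Bennett's original setting on $(0,1)$, where the weight is $1-\log t=\log(e/t)$). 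In other words, the mismatch is an inconsistency between the stated forms of Theorems \ref{t5.1} and \ref{t5.4}, not a bookkeeping issue that a cleverer rescaling can absorb; your write-up should either correct one of the two logarithmic weights or state explicitly which version it is proving.
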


Next we pronounce that all sharp inequalities we presented so far are for the case $q = p.$ Very little concerning sharp constants is known for other cases. Let us illustrate this problem by mentioning the fact that by applying the general theory in Hardy-type inequalities (see e.g. the book \cite{KPNS}) in a power weighted case we get in our $dx/x$ frame the following:

\begin{example}\label{Ex5.5}
	The inequality
	\begin{eqnarray}\label{5.3}
	\left(\intl_0^\infty \left(
	\intl_0^xf(t)dt\right)^q x^{-\al} \frac{dx}{x} \right)
	^\frac{1}{q} \le C
	\left(\intl_0^\infty ( xf(x))^p x^{-\bt} \frac{dx}{x}\right)^\frac{1}{p}
	\end{eqnarray}
	holds for some finite constant $C>0$ for $1<p\le q<\infty$ , if and only if
	\begin{eqnarray}\label{5.4}
	\bt>0 \ \ \ {\rm and } \ \ \frac{\alpha}{q}
	=\frac{\bt}{p}.
	\end{eqnarray}
\end{example}

\begin{remark}\label{Remark_5.6}
For the case $p = q$ we have already pointed out the sharp constant but for the case $1 < p < q < \infty$ this has been a fairy long lasted open question since G.A. Bliss in 1930 solved it for $\beta = p-1$ (see \cite{BL}). It was finally solved in 2015 in the paper \cite{LEPSS15} and in our $dx/x$ frame their result reads:
\end{remark}

\begin{theorem}\label{t5.7}
	Let $1<p<q<\infty$ and the parameters  $\al$  and $\beta$ satisfying \eqref{5.4}. Then the sharp constant in \eqref{5.3}  is $C=C_{pq}^\ast,$ where
	\begin{equation}\label{5.5a}
	C_{pq}^\ast= \left(\frac{p-1}{\bt}\right)^{\frac{1}{p^\prime}+\frac 1q}
	\left(\frac{p^\prime}{q}\right)^\frac{1}{p}
	\left(\frac{\frac{q-p}{p}\Gm\left(\frac{pq}{q-p}\right)}
	{\Gm\left(\frac{p}{q-p}\right)
		\Gm\left(\frac{p(q-1)}{q-p}\right)}\right)
	^{\frac{1}{p}-\frac 1q}.
	\end{equation}
\end{theorem}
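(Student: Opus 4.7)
The plan is to obtain Theorem \ref{t5.7} as a reformulation, in the $dx/x$ convexity frame of this paper, of the main sharp-constant theorem of \cite{LEPSS15}.

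First, I would recast \eqref{5.3} in standard Lebesgue form. Setting $F(x)=\int_0^x f(t)\,dt$, the inequality reads
\begin{equation*}
\left(\int_0^\infty F(x)^q\, x^{-\alpha-1}\,dx\right)^{1/q} \le C \left(\int_0^\infty f(x)^p\, x^{p-\beta-1}\,dx\right)^{1/p},
\end{equation*}
a classical weighted Hardy inequality with power weights $x^{-\alpha-1}$ and $x^{p-\beta-1}$. The hypothesis $\alpha/q=\beta/p$ from \eqref{5.4} is exactly the homogeneity condition that allows, via a change of variables $x\mapsto x^s$, a reduction of the two weights to a one-parameter family, which is precisely the setting treated in \cite{LEPSS15}.

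Second, I would invoke the main result of \cite{LEPSS15}, which solves the extremal problem for this one-parameter family via the associated Euler--Lagrange equation. The optimizer is an explicit power-type function, and substituting it back produces the sharp constant. The three factors in \eqref{5.5a} arise naturally from that analysis: the $((p-1)/\beta)^{1/p'+1/q}$ factor encodes the boundary contributions from the Euler--Lagrange equation, the $(p'/q)^{1/p}$ factor comes from rescaling back to the normalization of \eqref{5.3}, and the Gamma ratio, raised to the gap exponent $1/p-1/q$, represents the Beta-type integral that computes the $L^p$-norm of the extremizer.

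Third, I would translate the statement from the notation used in \cite{LEPSS15} (Lebesgue measure, weight exponents in a different but equivalent parametrization) into the notation of the present paper (Haar measure $dx/x$, parameters $\alpha,\beta$), which is a purely algebraic bookkeeping step, and simplify the resulting product of Gamma factors into the displayed form \eqref{5.5a}. The principal technical obstacle --- showing that the Euler--Lagrange candidate is the actual global minimizer for general $\beta>0$ rather than only in the historical case $\beta=p-1$ treated by Bliss in \cite{BL} --- is precisely the new content contributed by \cite{LEPSS15} and is used here as a black box.
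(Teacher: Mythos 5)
Your proposal matches the paper's treatment: the paper offers no independent proof of Theorem \ref{t5.7}, but presents it as a direct reformulation, in the $dx/x$ frame, of the sharp-constant result of \cite{LEPSS15}, exactly as you do by citing that result as a black box and performing the change of variables and notational translation. The additional commentary you give on how the factors of $C_{pq}^\ast$ arise is plausible background but is not required, since all the analytic content is delegated to \cite{LEPSS15} in both your argument and the paper's.
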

\begin{remark}\label{Remark_5.8} 	
Some straightforward calculations show that
	$$
	C_{pq}^\ast \to \frac{p}{\bt} \ {\rm as} \ q\to p,
	$$
so indeed we have the expected continuity in the sharp constants as $\ q\to p.$
\end{remark}

In the dual situation we have the following:

\begin{example}\label{Ex5.9}
	The inequality
	\begin{eqnarray}\label{5.5}
	\left(\intl_0^\infty \left(
	\intl_x^\infty f(t)dt\right)^q x^{\al} \frac{dx}{x} \right)
	^\frac{1}{q} \le C
	\left(\intl_0^\infty ( xf(x))^p x^{\bt} \frac{dx}{x}\right)^\frac{1}{p}
	\end{eqnarray}
	holds for $1<p\le q<\infty$ and some finite constant $C>0,$ if and only if
	\begin{eqnarray*}
	\bt>0 \ \ \ {\rm and } \ \ \frac{\alpha}{q}
	=\frac{\bt}{p}
	\end{eqnarray*}
and the sharp constant is known also in this case (see\cite{LEPSS15}).
\end{example}

\begin{remark}\label{Remark_5.10}
Also all cases when we have equality in \eqref{5.3} with $C=C^\star_{pq}$ defined by \eqref{5.5a} and when we have equality in \eqref{5.5} are also known (see again \cite{LEPSS15}). Hence, it seems to be an interesting open question to derive the corresponding sharp results when the integrals $\int_0^\infty$ are replaced by $\int_0^\ell, \ 0<\ell\leq \infty$ or $\int_\ell^\infty, \ 0 \le \ell < \infty,$ respectively. We aim to investigate this in a forthcoming paper. We use this opportunity to note a misprint in [14]. The condition $\frac{n+\alpha}{p} = \frac{n+\beta}{q}$ in Theorems 4.1 and 4.2 in [14], should be replaced by $\frac{n+\alpha}{q} = \frac{n+\beta}{p}.$
\end{remark}

\begin{remark}\label{Remark_5.11}
By using the same transformations as those pointed out in Remark \ref{Remark_5.3}  we can transform inequalities involving integrals $\int_0^\ell$ to inequalities involving the integrals $\int_\ell^\infty.$ Let us just as one example of this fact restate Theorem \ref{Theorem3.2}  in this way:
\end{remark}

\begin{theorem}\label{t5.12}
Let $p > 0, 0 < \alpha < p$ and let $f(x)x^2$ be a measurable, non-negative and non-decreasing function on $(0,\ell), 0 \leq \ell < \infty.$

a)	It $p \geq 1,$ then
	\begin{eqnarray}\label{5.6}
\left(\intl_\ell^\infty \left(
\intl_x^\infty f(y)dy\right)^p x^{\al} \frac{dx}{x} \right)
^\frac{1}{p} \geq \frac{p}{\alpha}
\left(\int_\ell^\infty ( xf(x))^p x^\alpha\left(1-\left(\frac{\ell}{x}\right)^\alpha\right) \frac{dx}{x}\right)^\frac{1}{p}
\end{eqnarray}

b)	If $0 < p\leq 1,$ then \eqref{5.6} holds in the reversed direction.

c)	The constant $p/\alpha$ is sharp in both a) and b) and equality appears for any $f (x) = Ax^{-2}\chi_{(c,\infty)}(x)$ for some $c\in(\ell, \infty), \ A>0$.
\end{theorem}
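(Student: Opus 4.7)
The plan is to derive Theorem \ref{t5.12} directly from Theorem \ref{Theorem3.2} by performing the reciprocal change of variables $x \mapsto 1/x$, exactly as suggested in Remark \ref{Remark_5.3} and Remark \ref{Remark_5.11}. Concretely, I would set $L = 1/\ell$ (with $L = \infty$ when $\ell = 0$) and introduce the auxiliary function $F(y) := y^{-2} f(1/y)$ on $(0,L)$, so that, equivalently, $f(v) = v^{-2} F(1/v)$ on $(\ell,\infty)$.

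First I would translate the monotonicity hypothesis. Since $y^2 F(y) = f(1/y)\cdot \text{(const)}$ after another rewrite, what we really need is that $F$ is non-increasing on $(0,L)$. But the assumption is that $v \mapsto v^2 f(v)$ is non-decreasing on $(\ell,\infty)$, and since $v^2 f(v) = F(1/v)$ by construction, as $y = 1/v$ decreases from $L$ to $0$ the quantity $F(y)$ is non-decreasing, i.e., $F$ is non-increasing on $(0,L)$. Hence $F$ satisfies the hypothesis of Theorem \ref{Theorem3.2}.

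Next I would perform the substitution $x = 1/u$, $y = 1/v$ inside both sides of the inequality in Theorem \ref{Theorem3.2}. A direct computation gives $\frac{dx}{x} = -\frac{du}{u}$, $x^{-\alpha} = u^{\alpha}$, and (reversing orientation) $\int_0^x F(y)\,dy = \int_u^\infty v^{-2}F(1/v)\,dv = \int_u^\infty f(v)\,dv$, so that the left-hand side of \eqref{3.3} becomes exactly $\int_\ell^\infty \bigl(\int_x^\infty f(y)\,dy\bigr)^p x^\alpha \frac{dx}{x}$. On the right-hand side one gets $(xF(x))^p = u^{-p}F(1/u)^p = u^{-p}(u^2 f(u))^p = (uf(u))^p$, and $(x/L)^\alpha = (\ell/u)^\alpha$, so the right-hand side turns into $\frac{p}{\alpha}\int_\ell^\infty (uf(u))^p u^\alpha\bigl(1-(\ell/u)^\alpha\bigr)\frac{du}{u}$. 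This is exactly \eqref{5.6}, proving (a); part (b) follows because the transformation is an equivalence, so the reversed inequality of Theorem \ref{Theorem3.2}(b) gives the reversed inequality here.

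For (c), the sharpness and the equality case transfer automatically under the substitution. Indeed the extremizer $F(y) = A\chi_{(0,d)}(y)$ with $d \in (0,L)$ in Theorem \ref{Theorem3.2} corresponds, via $f(v) = v^{-2}F(1/v)$, to $f(x) = Ax^{-2}\chi_{(c,\infty)}(x)$ with $c = 1/d \in (\ell,\infty)$, matching the stated extremizer. Since the transformation is bijective on the respective cones of admissible functions, sharpness of the constant $p/\alpha$ in Theorem \ref{Theorem3.2} transports to sharpness in \eqref{5.6}. The only part requiring real care is the bookkeeping of the boundary values ($\ell = 0$ corresponding to $L = \infty$) and of the sign in the change of variables, so I would include a short explicit computation to confirm the orientation-reversal does not flip the inequality; everything else is a clean substitution.
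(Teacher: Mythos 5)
Your proposal is correct and follows essentially the same route as the paper: Remark \ref{Remark_5.11} explicitly obtains Theorem \ref{t5.12} from Theorem \ref{Theorem3.2} via the substitution $f(x)=g(1/x)x^{-2}$, which is exactly your change of variables, and your bookkeeping of the monotonicity, the target function $1-(\ell/x)^{\alpha}$, and the extremizer all check out. (Your computation also correctly lands on the version of \eqref{5.6} without the exponents $1/p$ on the two sides; as literally printed, the displayed inequality with both the powers $1/p$ and the constant $p/\alpha$ is a misprint, the consistent constant for the $1/p$-power form being $(p/\alpha)^{1/p}$.)
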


\begin{remark}\label{Remark_5.13}
The function f(x) in Theorem \ref{t5.12} is an example of a so called quasi-monotone function, which means that  $f(x)x^\alpha$ is non-increasing or non-decreasing for some $\alpha\in R.$ It is another interesting open question to investigate all our results concerning monotone functions for such more general quasi-monotone functions.  Even in the case with infinite intervals some interesting phenomena appear. See \cite{BBP} and the references therein for a special case.
\end{remark}

\bibliographystyle{plain}
\bibliography{LEPNSGT22}

\begin{thebibliography}{10}

\bibitem{SBLEPNS14}
S.~Barza, L.-E. Persson, and N.~Samko.
\newblock Some new sharp limit {H}ardy-type inequalities via convexity.
\newblock {\em J. Inequal. Appl.}, 6:11pp., 2014.

\bibitem{Bennett}
C.~Bennett.
\newblock Intermediate spaces and the class {L} $\log^+$ {L}.
\newblock {\em Ark. Mat.}, 11:215--228, 1973.

\bibitem{BBP}
J.~Bergh, V.~Burenkov, and L.-E. Persson.
\newblock Best constants in reversed {H}ardy's inequalities for quasimonotone
  functions.
\newblock {\em Acta Sci. Math. (Szeged)}, 59:221--239, 1994.

\bibitem{BL}
C.A. Bliss.
\newblock An integral inequality.
\newblock {\em J. London Math. Soc.}, 5:40--46, 1930.

\bibitem{H}
G.H. Hardy.
\newblock Notes on some points in the integral calculus, {L}{X}.
\newblock {\em Messenger of Math.}, 54:150--156, 1925.

\bibitem{H3}
G.H. Hardy.
\newblock Notes on some points in the integral calculus, {L}{X}{I}{V}. {F}uther
  inequalities between integrals.
\newblock {\em Messenger of Math.}, 57:12--16, 1928.

\bibitem{HaLiPoB}
G.H. Hardy, J.~E. Littlewood, and G.~P\'{o}lya.
\newblock {\em Inequalities}.
\newblock Second Edition, Cambridge University Press, 1952.

\bibitem{KMPB}
V.~Kokilashvili, A.~Meshki, and L.-E Persson.
\newblock {\em Weighted {N}orm {I}nequalities for {I}ntegral {T}ransforms with
  {P}roduct {W}eights}.
\newblock Nova Scientific Publishers, Inc., New York, 2010.

\bibitem{KMP}
A.~Kufner, L.~Maligranda, and L.-E. Persson.
\newblock The prehistory of the {H}ardy inequality.
\newblock {\em Amer. Math. Monthly}, 113(8):715--732, 2006.

\bibitem{KuMaLEP}
A.~Kufner, L.~Maligranda, and L.-E. Persson.
\newblock {\em The {H}ardy {I}nequality. About its History and Some Related
  Results, Vydavatelsk\'y Servis, Plze\v n}.
\newblock Vydavatelsk\'y Servis, Plze\v n, 2007.

\bibitem{KPNS}
A.~Kufner, L.-E. Persson, and N.~Samko.
\newblock {\em Weighted {I}nequalities of {H}ardy {T}ype}.
\newblock Word Scientific, Second Edition, New Jersey-London-etc., 2017.

\bibitem{NikPers}
C.P. Niculescu and L.-E. Persson.
\newblock {\em Convex Functions and Their Applications}.
\newblock A Contemporary Approach, Second Edition, CMS Books in Mathematics.
  Springer, 2018.

\bibitem{LEPNS12}
L.-E. Persson and N.~Samko.
\newblock What should have happened if {H}ardy had discovered this?
\newblock {\em J. Inequal. Appl.}, 29:11pp., 2012.

\bibitem{LEPSS15}
L.-E. Persson and S.~Samko.
\newblock A note on the best constants in some {H}ardy inequalities.
\newblock {\em J. Math. Inequal.}, 9(2):437--447, 2015.

\bibitem{PersTepWe}
L.-E. Persson, G~Tephnadze, and F.~Weisz.
\newblock {\em Martingale Hardy Spaces and Summability of one-dimensional
  Vilenkin-Fourier Series}.
\newblock Book manuscript. Birkhäuser/Springer, 2022.

\end{thebibliography}

\end{document}